\newtheorem{thm}{Theorem}[section]
\newtheorem{lem}{Lemma}[section]
\theoremstyle{definition}
\newtheorem{definition}{Definition}[section]
\theoremstyle{remark}
\numberwithin{equation}{section}
\theoremstyle{example}
\numberwithin{equation}{section} \numberwithin{algorithm}{section}
\begin{document}

\begin{center}

\textbf{\large A general framework for solving convex optimization problems involving the sum of three convex functions}

\end{center}

\begin{center}
Yu-Chao Tang$^{1}$, Guo-Rong Wu$^{2}$, Chuan-Xi Zhu$^{1}$
\end{center}

\begin{center}
1. Department of Mathematics, Nanchang University, Nanchang 330031,
P.R. China\\
2. Department of Radiology and BRIC, University of North Carolina at Chapel Hill, NC 27599, USA

\end{center}

\bigskip

\begin{abstract}
In this paper, we consider solving a class of convex optimization problem which minimizes the sum of three convex functions $f(x)+g(x)+h(Bx)$, where $f(x)$ is differentiable with a Lipschitz continuous gradient, $g(x)$ and $h(x)$ have a closed-form expression of their proximity operators and $B$ is a bounded linear operator. This type of optimization problem has wide application in signal recovery and image processing. To make full use of the differentiability function in the optimization problem, we take advantage of two operator splitting methods: the forward-backward splitting method and the three operator splitting method. In the iteration scheme derived from the two operator splitting methods, we need to compute the proximity operator of $g+h \circ B$ and $h \circ B$, respectively. Although these proximity operators do not have a closed-form solution in general, they can be solved very efficiently. We mainly employ two different approaches to solve these proximity operators: one is dual and the other is primal-dual.
Following this way, we fortunately find that three existing iterative algorithms including Condat and Vu algorithm, primal-dual fixed point (PDFP) algorithm and primal-dual three operator (PD3O) algorithm are a special case of our proposed iterative algorithms. Moreover, we discover a new kind of iterative algorithm to solve the considered optimization problem, which is not covered by the existing ones. Under mild conditions, we prove the convergence of the proposed iterative algorithms. Numerical experiments applied on fused Lasso problem, constrained total variation regularization in computed tomography (CT) image reconstruction and low-rank total variation image super-resolution problem demonstrate the effectiveness and efficiency of the proposed iterative algorithms.

\end{abstract}


\noindent \textbf{Keywords:} Forward-backward splitting method; Three operator splitting method; Dual; Primal-dual; Total variation.

 \noindent \textbf{2010 Mathematics Subject Classification:} 90C25;
 65K10.

\newpage

\section{Introduction}
\label{sec:intro}  
Many problems in signal and image processing, machine learning and statistical prediction can be transformed into the minimization of the sum of two convex functions, a number of efficient iterative algorithms have been proposed in the last ten decades. Here we briefly review some general iterative algorithms. In particular, the forward-backward splitting algorithm \cite{combettes2005} is widely used to solve the following optimization problem,
\begin{equation}\label{sum-two}
\min_{x\in X}\ f(x)+g(x),
\end{equation}
where $X$ is a Hilbert space, $f\in \Gamma_{0}(X)$ is differentiable with a Lipschitz continuous gradient, $g\in \Gamma_{0}(X)$ is simple and maybe nonsmooth.  Here and in what follows, $\Gamma_{0}(X)$ denotes the class of all the proper lower-semicontinuous (lsc) convex functions from a Hilbert space $X$ to $(-\infty, +\infty]$.  The iterative shrinkage-thresholding algorithm (ISTA) \cite{daubechies2004} could be viewed as a special case of the forward-backward splitting algorithm applied to (\ref{sum-two}) when $f(x)= \frac{1}{2}\|Ax-b\|_{2}^{2}$ and $g(x)=\lambda \|x\|_1$, where $A\in R^{m\times n}$, $b\in R^{n}$ and $\lambda >0$. As a generalization of the ISTA, Beck and Teboulle \cite{beck2009} proposed a fast iterative shrinkage-thresholding algorithm (FISTA) on the basis of the Nesterov's accelerated gradient method \cite{nesterov1983}. The $\ell_1-$norm promotes sparsity for one-dimensional signal and plays an important role in the compressive sensing theory \cite{candes2006}. For two dimensional signal, such as image, the total variation (TV) regularization \cite{rof1992} has been widely used for image restoration and image reconstruction since its ability to maintain sharp edges of piecewise constant images(see, for example, \cite{chantf1999,bioucas2007,zhu2010,bonettini2012,Yongqingzhang2015TIP,zengty2016SCM}). Beck and Teboulle \cite{beckandteboulle2009} applied the FISTA to solve the total variation image deblurring problem. Since the proximity operator of the total variation has no closed-form solution, they applied the dual gradient method \cite{chambolle2004} to solve this sub-problem. It is observed that the total variation can be represented by a combination of a convex function with a first-order difference matrix, the following general optimization problem was gained much attention,
\begin{equation}\label{sum-two2}
\min_{x\in X}\ f(x)+h(Bx),
\end{equation}
where $X$ and $Y$ are two Hilbert spaces, $f\in \Gamma_{0}(X)$ and $h\in \Gamma_{0}(Y)$, $B:X\rightarrow Y$ is a linear transform. In the literature, the proximity operator of $f$ and $h$ are always assumed to be easily computed. Chambolle and Pock \cite{chambolleandpock2011} proposed a primal-dual proximity algorithm to solve (\ref{sum-two2}) and its' dual problem in the sense of Fenchel conjugate. The primal-dual proximity algorithm  belongs to the primal-dual method, it updates both a primal and dual variable in each iteration. Some other primal-dual methods to solve (\ref{sum-two2}) can be found in \cite{Esser2010,zhang2011}. It was pointed out in \cite{he2012} that the primal-dual proximity algorithm can be interpreted as a proximal point algorithm (PPA). Another important method to solve (\ref{sum-two2}) is the alternating direction of multiplier method (ADMM) \cite{boyd1}. The ADMM was shown to be equivalent to the Douglas-Rachford splitting method \cite{combettes2007} applied to the dual problem of (\ref{sum-two2}) in \cite{esser2009}. Compare the primal-dual proximity algorithm with ADMM, the former only needs the matrix-vector multiplications of operator $B$ and its adjoint operator, while ADMM needs to solve a non-trivial sub-problem. If $f$ is differentiable with a Lipschitz continuous gradient,
Chen et al. \cite{chenpj2013} proposed a primal-dual fixed point algorithm based on the proximity operator (PDFP$^2$O) to solve (\ref{sum-two2}).
When $f(x)=\frac{1}{2}\|Ax-b\|_{2}^2$, the PDFP$^2$O reduces to the generalized soft-thresholding algorithm proposed by Loris and Verhoeven \cite{loris2011}. The PDFP$^2$O could be viewed as a combination of the forward-backward splitting algorithm \cite{combettes2005} with the fixed point algorithm \cite{Micchelliandshen2011} for computing the proximity operator of $h \circ B$, which is also a primal-dual method. We refer the interested reader to \cite{komodakis2015IEEE} for a comprehensive review of the recent development of the primal-dual related algorithms.

In recent years, the minimization of the sum of three convex functions was received much attention that takes the form of
\begin{equation}\label{three-sum}
\min_{x\in X}\ f(x) + g(x) + h(Bx),
\end{equation}
where $X$ and $Y$ are two Hilbert spaces, $f\in \Gamma_{0}(X)$ is differentiable with a Lipschitz continuous gradient, $g\in \Gamma_{0}(X)$ and $h\in \Gamma_{0}(Y)$ maybe nonsmooth,
$B: X\rightarrow Y$ is a bounded linear operator. In the following, we always assume that the proximal function associated with $g$ and $h$ are easy to be computed. It is obvious that (\ref{sum-two}) and (\ref{sum-two2}) are a special case of (\ref{three-sum}).
Although the iterative algorithms mentioned before can be exploited to solve the considered three sum of convex functions problem (\ref{three-sum}), the corresponding iterative algorithm would either introduce more dual variables or doesn't make use of the differentiable property of the function $f(x)$.
 To overcome these drawbacks, there are several efficient iterative algorithms have been proposed to solve the optimization problem (\ref{three-sum}). Condat \cite{condat2013} proposed a primal-dual splitting algorithm to solve (\ref{three-sum}). Vu \cite{vu2013ACM} proposed an iterative algorithm to solve a zero of a monotone operator inclusion problem, which was based on the primal-dual splitting framework proposed in \cite{combettes2012}. It's observed that Vu's algorithm coincides with Condat's algorithm when applied to solve the particular optimization problem (\ref{three-sum}). In the following, we  use Condat-Vu algorithm to refer the iterative algorithm proposed by Condat \cite{condat2013} and Vu \cite{vu2013ACM} independently.
The Condat and Vu algorithm includes several existing iterative algorithms as its special case. When $f=0$, it becomes Chambolle-Pock algorithm \cite{chambolleandpock2011,pock1}, also known as the primal-dual hybrid gradient method (PDHG) \cite{zhu2008,Esser2010}, and proximal forward-backward splitting algorithm when $h=0$ \cite{combettes2005}. It has two iterative parameters restricted by the operator norm $B$ and the Lipschitz constant of $\nabla f$ together. Li and Zhang \cite{liqia2016} proved the convergence of a general primal-dual splitting algorithm, which included Condat-Vu algorithm as a special case. They proved the iteration schemes have $O(1/k)$ convergence rate both in the ergodic sense and in the sense of partial primal-dual gap. They also introduced a quasi-Newton and an overrelaxation strategies for accelerating the corresponding algorithms. Wen et al. \cite{wenmeng2016} proposed a self-adaptive primal-dual splitting algorithm, which didn't require to know the operator norm $B$. Latafat and Patrinos \cite{Latafat2017COP} proposed an asymmetric forward-backward-adjoint (AFBA) splitting algorithm, which included the Condat and Vu algorithm.
 As a generalization of the primal-dual fixed point algorithm based on proximity operator \cite{chenpj2013,loris2011} and the preconditioned alternating projection algorithm (PAPA) \cite{krol2012IP}, Chen et al. \cite{chenpj2016} proposed a so-called primal-dual fixed point (PDFP) algorithm to solve the optimization problem (\ref{three-sum}). They proved the convergence of the PDFP based on the fixed point theory and also obtain the convergence rate of the iteration scheme under suitable conditions. Very recently, Yan \cite{yanm2016} proposed a new primal-dual algorithm for solving the optimization problem (\ref{three-sum}), namely primal-dual three operator (PD3O). It reduces to the primal-dual proximity algorithm of Chambolle-Pock algorithm \cite{chambolleandpock2011,pock1} when $f=0$ and PDFP$^{2}$O \cite{chenpj2013} when $g=0$. In addition, it recovers the three-operator splitting scheme developed by Davis and Yin \cite{davis2015} when $B$ is the identity operator.

As far as we know, Condat-Vu algorithm \cite{condat2013,vu2013ACM}, PDFP \cite{chenpj2016} and PD3O \cite{yanm2016} are three main type of methods that designed to solve the sum of three convex functions (\ref{three-sum}). However, there are no results to show the relation of these iterative algorithms with other iterative algorithms. In this paper, we will present a unified framework to derive efficient iterative algorithms for solving the optimization problem (\ref{three-sum}), which are based on the forward-backward splitting method \cite{combettes2005} and the three operator splitting method \cite{davis2015}, respectively. Our main works are divided into two parts: (1)\ Based on the forward-backward splitting method, we need to compute the proximity operator $g + h\circ B$, which has no closed-form solution in general. To tackle this difficulty, we exploit two approaches: one is dual and the other is primal-dual. In the dual approach, we derive the dual of the minimization problem related to the proximity operator $g + h\circ B$. Thanks to the simple form of function $g(x)$, we show that the corresponding dual problem is the sum of two convex functions with one is differentiable with a Lipschitz continuous gradient and the other is the Fenchel conjugate of $h$. Then, the dual problem is solved once again by the forward-backward splitting algorithm and the primal optimal solution is obtained via the dual optimal solution. In the primal-dual approach, we solve the proximity minimization of $g(x)+h(Bx)$ based on the primal-dual proximity algorithm. We also point out the connection of our proposed iterative algorithms with PDFP \cite{chenpj2016} and Condat-Vu algorithm \cite{condat2013,vu2013ACM}, respectively. (2)\ We employ the three operator splitting method to solve the considered optimization problem (\ref{three-sum}). In the three operator iteration scheme, we need to compute the proximity operator of $h\circ B$. At this point, we also use the dual and primal-dual method to solve the proximity operator of $h\circ B$. The connection of the PD3O \cite{yanm2016} with our proposed iterative algorithm is presented. We show that PDFP \cite{chenpj2016} and PD3O \cite{yanm2016} are identical.  We also obtain a new iterative algorithm to solve the optimization problem (\ref{three-sum}) from the primal-dual approach.  To demonstrate the efficiency and effectiveness of the proposed iterative algorithms, we focus on applying them to solve the fused Lasso problem (\ref{fused-lasso}), the constrained total variation regularization problem (\ref{constrained-tv}) and the low-rank total variation image super-resolution problem (\ref{lrtv}). We present a detailed discussion on how to select the parameters for the proposed iterative algorithms based on the numerical results.

 \noindent \textbf{Fused Lasso problem.}\ The Fused Lasso problem was first introduced by Tibshirani et al. \cite{tibshirani-JRSSB-2005} which is an extension of the Lasso problem \cite{Tibshirani1996}. It is dedicated to finding a vector with sparsity in both of the coefficients and their successive differences. The optimization problem is
\begin{equation}\label{fused-lasso}
\min_{x\in R^n}\ \frac{1}{2}\|Ax-b\|_{2}^2 + \mu_1 \|x\|_1 + \mu_2 \|Dx\|_{1},
\end{equation}
where $\mu_1 >0$ and $\mu_2 >0$ are regularization parameters, $A\in R^{m\times n}$, $b\in R^{n}$ and the difference matrix $D_{(n-1)\times n}$ is defined by
$$
D = \left(
      \begin{array}{ccccc}
        -1 & 1 &  &  &  \\
         & -1 & 1 &  &  \\
         &  & \cdots &  &  \\
         &  &  & -1 & 1 \\
      \end{array}
    \right).
$$
The fused Lasso problem was applied in many fields, see for example \cite{tibshiraniandwang2008,liuj2010,yegb2011,lixx2014}. When $A=I$, the fused Lasso problem reduces to proximity operator of $\mu_1 \|x\|_1 + \mu_2 \|Dx\|_1$, which was first studied by Frideman et al. \cite{Friedman2007}.   Liu et al. \cite{liuj2010} simplified the method of Frideman et al. \cite{Friedman2007}. Recently, Shi et al. \cite{shihjm2017} considered the proximity operator of $g(x)+\mu_2 \|Dx\|_1$, where $g\in \Gamma_{0}(X)$, which generalized the results of Frideman et al. \cite{Friedman2007} and Liu et al. \cite{liuj2010}. Our considered problem (\ref{three-sum}) is more general than the results of \cite{Friedman2007,liuj2010,shihjm2017}.

\noindent \textbf{Constrained total variation regularization problem.}\ The total variation (TV) \cite{rof1992} regularization is widely used in the image restoration and image reconstruction problems \cite{sidky20083,hager2015JORSC}. To recover an image $x$ is often reduces to solve the following constrained TV regularization problem
\begin{equation}\label{constrained-tv}
\min_{x\in C}\ \frac{1}{2}\|Ax-b\|_{2}^2 + \mu \|x\|_{TV},
\end{equation}
Or equally
\begin{equation}
\min_{x\in R^n}\ \frac{1}{2}\|Ax-b\|_{2}^2 + \mu \|x\|_{TV}+ \delta_{C}(x),
\end{equation}
where $A\in R^{m\times n}$ is the system matrix, $b\in R^{m}$ is the collected data which is corrupted by noise, and $\mu >0$ is the regularization parameter which gives the balance between the data error term and the regularization term; $C$ is an nonempty closed convex set which is set according to the prior information on the image, such as nonnegative or bound range. $\delta_{C}$ denotes the indicator function of the closed convex set $C$ such that $\delta_{C}(x)=0$ if $x\in C$ and $+\infty$ otherwise. We note that the total variation $\|x\|_{TV}$ can be represented by a combination of convex function with a discrete gradient operator (See Definition \ref{tv-definition}).

\noindent \textbf{Low-rank total variation image super-resolution problem.}\ Besides the TV regularization, low-rank regularization is widely used in image restoration for recovering missing values in an image. A low-rank total variation (LRTV) image super-resolution optimization problem takes the form of
\begin{equation}\label{lrtv}
\min_{X\in R^{m\times n}}\ \frac{1}{2}\| DSX - T \|_{F}^{2} + \lambda_1 \|X\|_{*} + \lambda_2 \|X\|_{TV},
\end{equation}
where $\lambda_1 >0$ and $\lambda_2 >0$ are two regularization parameters, $T$ denotes the observed lower-resolution image, $D$ is a down-sampling operator, $S$ is a blurring operator and $X$ is the high-resolution image that we want to recover. Here, $\|\cdot\|_{F}$ represents the Frobenius-norm of matrix and $\|X\|_{*}$ denotes the nuclear norm of matrix $X$. The LRTV image super-resolution optimization problem (\ref{lrtv}) was first appeared in \cite{shifeng2013} and  further studied in \cite{shif2015}. It is a generalization of image super-resolution by TV-regularization \cite{marquina2008}.

The rest of the paper is organized as follows. In Section \ref{basic_notation}, we provide some basic definitions and lemmas from convex analysis. In Section \ref{forward-backward-dual-primal-dual}, we present the forward-backward splitting method combined with the dual and primal-dual approach to solve the considered optimization problem (\ref{three-sum}). We prove the convergence of the proposed iterative algorithms in finite-dimensional Hilbert spaces. We also point out the relation of some existing iterative algorithms with our iterative algorithms. In Section \ref{three-operator-dual-primal-dual}, we use the three operator splitting method together with the dual and primal-dual methods to solve (\ref{three-sum}). We show the convergence of the related iterative algorithms. The relationship between our iterative algorithms and the existing ones will also be included. In Section \ref{numer_test}, we apply the proposed iterative algorithms to solve the fused Lasso problem, the constrained total variation regularization problem and the low-rank total variation image super-resolution problem. Finally, we give some conclusions.

\section{Preliminaries}
\label{basic_notation}
Throughout the paper, let $X$ be a Hilbert space equipped with the inner product $\langle \cdot, \cdot\rangle$
 and associated norm $\| \cdot \|$. Let $I$ be the identity operator on $X$. The operator norm of $B$ is denoted by
 $\|B\| = \sup_{x\in X}\frac{\|Bx\|}{\|x\|}$. The adjoint of an operator $B: X\rightarrow Y$ by $B^{*}$ such that $\langle x, By\rangle = \langle B^{*}x, y \rangle$.  We will denote $C\subseteq X$ as a nonempty, closed and convex set, the interior of $C$ will be denoted int $C$ and its relative interior ri $C$. Denote $\Gamma_{0}(X)$  as the set of all proper lower-semicontinuous (lsc) convex functions from $X$ to $(-\infty,+\infty]$.

 A real-valued function $f: X \rightarrow (-\infty,+\infty]$ is coercive, if $\lim_{\|x\|\rightarrow +\infty}f(x) = +\infty$. The domain of $f$ is defined by  dom $ f= \{ x\in X: f(x) < +\infty \} $  and $f$ is proper if dom $f\neq \emptyset$. We say that a real-valued function $f$ is lower semi-continuous (lsc) if $\lim\inf_{x\rightarrow x_0} f(x) \geq f(x_0)$. The subdifferential of $f$ is the set-valued operator $\partial f :X\rightarrow 2^{X} : x\mapsto \{ u\in X | f(y) \geq f(x) + \langle u, y-x\rangle, \forall y\in X \}$.

 The following fact will be required.

 \begin{lem}(\cite{Zalinescu2012})
 Let $g\in \Gamma_{0}(X)$, $h\in \Gamma_{0}(Y)$ and $B:X\rightarrow Y$ is a bounded linear operator such that $0\in \emph{int}(B (\emph{dom} g)  - \emph{dom} h )$, then $\partial (g + h\circ B) = \partial g + B^{*} \circ \partial h \circ B$.
 \end{lem}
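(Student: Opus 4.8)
The plan is to establish the two inclusions separately, noting that only one of them needs the qualification hypothesis. The inclusion $\partial g + B^{*}\circ\partial h\circ B \subseteq \partial(g+h\circ B)$ holds with no constraint qualification at all: if $u\in\partial g(x)$ and $v\in\partial h(Bx)$, then for every $z\in X$ the two subgradient inequalities $g(z)\ge g(x)+\langle u,\,z-x\rangle$ and $h(Bz)\ge h(Bx)+\langle v,\,Bz-Bx\rangle = h(Bx)+\langle B^{*}v,\,z-x\rangle$ add up to $(g+h\circ B)(z)\ge (g+h\circ B)(x)+\langle u+B^{*}v,\,z-x\rangle$, so $u+B^{*}v\in\partial(g+h\circ B)(x)$. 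It therefore remains to prove $\partial(g+h\circ B)\subseteq \partial g+B^{*}\circ\partial h\circ B$, and this is where $0\in\operatorname{int}\!\big(B(\operatorname{dom}g)-\operatorname{dom}h\big)$ enters. (One could alternatively split the identity into a ``sum rule'' for $g$ and $h\circ B$ followed by a ``chain rule'' for $h\circ B$, but reconciling the two intermediate qualifications is more cumbersome than the direct dual argument below.)

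The key analytic input I would record first is the Fenchel description of $(g+h\circ B)^{*}$ that the qualification produces. For a fixed $\xi$, apply Fenchel--Rockafellar duality to the problem $\inf_{x}\{\,g(x)-\langle\xi,x\rangle+h(Bx)\,\}$, whose optimal value is $-(g+h\circ B)^{*}(\xi)$; the condition $0\in\operatorname{int}(B(\operatorname{dom}g)-\operatorname{dom}h)$ is precisely the constraint qualification yielding zero duality gap together with attainment of the dual problem. Unwinding the conjugates, this gives
\[
(g+h\circ B)^{*}(\xi) \;=\; \min_{\eta\in Y}\big\{\,g^{*}(\xi-B^{*}\eta)+h^{*}(\eta)\,\big\},
\]
with the minimum attained for every $\xi\in\operatorname{dom}(g+h\circ B)^{*}$; equivalently one invokes the Attouch--Brézis exact infimal-convolution theorem, the qualification being what makes the value function of the associated perturbed problem continuous, hence subdifferentiable, at the origin. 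Establishing this attainment is the one genuinely nontrivial point of the lemma.

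Granting it, take $\xi\in\partial(g+h\circ B)(x)$. The Fenchel--Young equality gives $(g+h\circ B)(x)+(g+h\circ B)^{*}(\xi)=\langle\xi,x\rangle$, and by the displayed formula there is $\eta\in Y$ with $(g+h\circ B)^{*}(\xi)=g^{*}(\xi-B^{*}\eta)+h^{*}(\eta)$. Substituting and using $\langle\xi,x\rangle=\langle\xi-B^{*}\eta,x\rangle+\langle\eta,Bx\rangle$, the identity regroups as
\[
\big[\,g(x)+g^{*}(\xi-B^{*}\eta)-\langle\xi-B^{*}\eta,\,x\rangle\,\big]\;+\;\big[\,h(Bx)+h^{*}(\eta)-\langle\eta,\,Bx\rangle\,\big]\;=\;0 .
\]
Each bracket is $\ge 0$ by the Fenchel--Young inequality, so each vanishes; the equality case of Fenchel--Young then yields $\xi-B^{*}\eta\in\partial g(x)$ and $\eta\in\partial h(Bx)$, whence $\xi=(\xi-B^{*}\eta)+B^{*}\eta\in\partial g(x)+B^{*}\partial h(Bx)$. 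Together with the first inclusion this gives the claimed identity. The main obstacle, as indicated, is the exactness/attainment in the dual problem — showing the qualification forces strong duality for the perturbed problem — while the two subgradient computations and the Fenchel--Young bookkeeping are routine.
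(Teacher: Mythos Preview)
Your argument is correct and is essentially the standard route to this subdifferential calculus rule: the easy inclusion is immediate, and the hard one follows from the exact infimal-convolution formula $(g+h\circ B)^{*}(\xi)=\min_{\eta}\{g^{*}(\xi-B^{*}\eta)+h^{*}(\eta)\}$ (Attouch--Br\'ezis / Fenchel--Rockafellar under the stated interior-point qualification) combined with the Fenchel--Young equality characterisation of the subdifferential. The splitting into two nonnegative brackets that must each vanish is exactly the right bookkeeping, and you are right that the only substantive analytic step is dual attainment.

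There is nothing to compare against in the paper itself: the lemma is simply quoted from \cite{Zalinescu2012} as a known fact and no proof is given. Your write-up therefore supplies what the paper omits. If you want to tighten it further, you might state explicitly that $\operatorname{dom}(g-\langle\xi,\cdot\rangle)=\operatorname{dom}g$, so the qualification hypothesis applies uniformly for every $\xi$; otherwise the proof is complete as outlined.
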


We will use the concept of Fenchel Conjugate or Convex Conjugate to derive the dual of the considered optimization problem, which we recall below.

\begin{definition}(Fenchel Conjugate)
Let $f: X \rightarrow (-\infty, \infty ]$, the Fenchel Conjugate of $f$ denoted by $f^{*}$ is defined by
\begin{equation}
f^{*}(u) = \sup_{x} \langle x, u \rangle - f(x).
\end{equation}
\end{definition}
When $f$ is a proper lsc convex function, then $f^{**} = f$. For any $(x,u)\in X\times X$, $u\in \partial f(x)\Leftrightarrow x\in \partial f^{*}(u)$.

1962, Moreau \cite{Moreau1962} introduced the notion of proximity operator, which plays a very important role in designing proximal algorithms to solve convex optimization problems. A comprehensive review of the proximal algorithms can be found in \cite{parikh2014FTO}.
\begin{definition}(Proximity operator)
Let $f: X \rightarrow R \cup \{+\infty\}$ be a  proper lsc convex function. For $\lambda >0$, the proximity operator $prox_{\lambda f} : X \rightarrow X$ of $\lambda f$ is defined by
\begin{equation}
prox_{\lambda f}(v) = \arg\min_{x} \big\{ \frac{1}{2}\|x-v\|^{2} + \lambda f(x)   \big\}.
\end{equation}
\end{definition}

The proximity operator has many important properties. For instance, $prox_{\lambda f}$ is firmly nonexpansive, i.e.,
$$
\| prox_{\lambda f}(x) - prox_{\lambda f}(y)  \|^2 + \| (x-prox_{\lambda f}(x)) - (y-prox_{\lambda f}(y)) \|^2 \leq \|x-y\|^2, \ \forall x,y\in X.
$$
Equivalent,
$$
\|prox_{\lambda f}(x) - prox_{\lambda f}(y)\|^2 \leq \langle x-y, prox_{\lambda f}(x) - prox_{\lambda f}(y) \rangle, \forall x,y\in X,
$$
which means that the proximity operator is nonexpansive due to the Cauchy-Schwarz inequality.
The proximity operator is indeed an extension of the orthogonal projection operator. In fact, let $f(x) = \delta_{C}(x)$, then the proximity operator
 $prox_{\lambda \delta_{C}}(v) = P_{C}(v)$. The proximity operator of $\ell_1$-norm is the soft-thresholding operator which is commonly used in the sparse related optimization problems.
  For other interesting functions, we refer the readers to \cite{bauschkebook2011} for the explicit form of proximity operators.

\begin{definition}(Moreau envelope)
Let $f\in \Gamma_{0}(X)$ and $\lambda >0$, the Moreau envelope or Moreau-Yosida regularization is given by
\begin{equation}
\widetilde{f}_{\lambda}(x) = \inf_{y} \{ f(y) + \frac{1}{2\lambda}\|x-y\|^2  \}.
\end{equation}

\end{definition}

The next lemma shows that the Moreau-Yosida regularization is differentiable on $X$.
\begin{lem}(\cite{bauschkebook2011})\label{lem-differe-moreau-yosida}
Let $\lambda >0$ and $f\in \Gamma_{0}(X)$, then the function $f_{\lambda}(X): X \rightarrow R$ is differentiable and its gradient
\begin{equation}
\nabla \widetilde{f}_{\lambda}(x) = \frac{1}{\lambda}(x-prox_{\lambda f}(x)),
\end{equation}
with $1/\lambda$-Lipschitz continuous.
\end{lem}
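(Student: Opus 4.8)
The plan is to establish three things in sequence: that the infimum defining $\widetilde{f}_\lambda(x)$ is attained at the unique point $y = \mathrm{prox}_{\lambda f}(x)$, that the stated gradient formula holds, and that this gradient is $1/\lambda$-Lipschitz. First I would observe that for fixed $x$ the map $y \mapsto f(y) + \frac{1}{2\lambda}\|x-y\|^2$ is proper, lsc, and strongly convex (the quadratic term contributes strong convexity with modulus $1/\lambda$, and adding the convex function $f \in \Gamma_0(X)$ preserves this), hence it admits a unique minimizer; by the very definition of the proximity operator this minimizer is exactly $p := \mathrm{prox}_{\lambda f}(x)$, so $\widetilde{f}_\lambda(x) = f(p) + \frac{1}{2\lambda}\|x-p\|^2$ and the infimum is a minimum.

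Next I would derive the gradient. The cleanest route is via the Fenchel-conjugate identity already recalled in the excerpt: one checks that $\widetilde{f}_\lambda = \big(f^* + \tfrac{\lambda}{2}\|\cdot\|^2\big)^*$ up to the standard infimal-convolution/conjugation calculus, or, more directly, one uses the optimality condition for the minimization problem. Since $p$ minimizes $y \mapsto f(y) + \frac{1}{2\lambda}\|x-y\|^2$, Fermat's rule gives $0 \in \partial f(p) + \frac{1}{\lambda}(p - x)$, i.e. $\frac{1}{\lambda}(x-p) \in \partial f(p)$. To get that $\widetilde{f}_\lambda$ is differentiable with $\nabla \widetilde{f}_\lambda(x) = \frac{1}{\lambda}(x-p)$, I would show the function $x \mapsto \frac{1}{\lambda}(x - \mathrm{prox}_{\lambda f}(x))$ is a subgradient of $\widetilde{f}_\lambda$ everywhere and that $\widetilde{f}_\lambda$ is convex; a convex function whose subdifferential is single-valued (everywhere nonempty and a singleton) on an open set is differentiable there with gradient equal to that subgradient. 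Convexity of $\widetilde{f}_\lambda$ follows because it is an infimal convolution of two convex functions. For the subgradient inclusion, take any $x, x' \in X$, set $p = \mathrm{prox}_{\lambda f}(x)$, and estimate $\widetilde{f}_\lambda(x') \le f(p) + \frac{1}{2\lambda}\|x'-p\|^2$ (using $p$ as a feasible, not necessarily optimal, point for $x'$), then expand $\|x'-p\|^2 = \|x-p\|^2 + 2\langle x'-x, x-p\rangle + \|x'-x\|^2 \ge \|x-p\|^2 + 2\langle x'-x, x-p\rangle$ and rearrange to obtain $\widetilde{f}_\lambda(x') \ge \widetilde{f}_\lambda(x) + \langle \frac{1}{\lambda}(x-p), x'-x\rangle$ — wait, this needs the lower bound direction, so instead I would combine the upper estimate at $x'$ with the exact value at $x$ to get $\widetilde{f}_\lambda(x') - \widetilde{f}_\lambda(x) \le \langle \frac{1}{\lambda}(x-p), x'-x\rangle + \frac{1}{2\lambda}\|x'-x\|^2$, and symmetrically bound below; together these pin down the derivative.

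Finally, for the Lipschitz bound on $\nabla \widetilde{f}_\lambda$, I would use that $\mathrm{prox}_{\lambda f}$ is firmly nonexpansive (stated in the excerpt): writing $q := \mathrm{prox}_{\lambda f}(x')$, firm nonexpansiveness gives $\|p - q\|^2 + \|(x-p)-(x'-q)\|^2 \le \|x-x'\|^2$, whence $\|(x - \mathrm{prox}_{\lambda f}(x)) - (x' - \mathrm{prox}_{\lambda f}(x'))\| \le \|x-x'\|$, and multiplying by $1/\lambda$ yields $\|\nabla\widetilde{f}_\lambda(x) - \nabla\widetilde{f}_\lambda(x')\| \le \frac{1}{\lambda}\|x-x'\|$. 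The main obstacle is the middle step: turning the pointwise subgradient computation into an honest differentiability statement requires a small but careful convex-analysis argument (single-valued subdifferential on an open domain implies Gâteaux, and then Fréchet, differentiability), rather than just quoting Fermat's rule; the existence/uniqueness of the minimizer and the Lipschitz estimate are routine once firm nonexpansiveness is in hand.
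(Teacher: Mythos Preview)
The paper does not actually prove this lemma: it is stated with a citation to \cite{bauschkebook2011} and used as a black box, so there is no ``paper's own proof'' to compare against. Your argument is therefore strictly more than what the paper supplies, and its overall structure is the standard one found in the cited reference.

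On the substance: your treatment of existence/uniqueness via strong convexity and of the $1/\lambda$-Lipschitz bound via firm nonexpansiveness is correct and clean. The middle step is a little wobbly as written. The upper estimate you obtain, $\widetilde{f}_\lambda(x') - \widetilde{f}_\lambda(x) \le \langle \tfrac{1}{\lambda}(x-p), x'-x\rangle + \tfrac{1}{2\lambda}\|x'-x\|^2$, is exactly right. The matching lower bound does not come for free by ``symmetry'' because swapping $x$ and $x'$ introduces $q = \mathrm{prox}_{\lambda f}(x')$ rather than $p$; you then need one more line, namely $\|(x-p)-(x'-q)\| \le \|x-x'\|$ from firm nonexpansiveness, to control the cross term and conclude that the difference quotient is $O(\|x'-x\|^2)$ on both sides. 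Once you insert that step, Fr\'echet differentiability with the stated gradient follows directly, and the detour through ``single-valued subdifferential $\Rightarrow$ G\^ateaux $\Rightarrow$ Fr\'echet'' becomes unnecessary.
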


The Moreau equality presents the computation of proximity operator a  proper lsc convex function $f$ from its convex conjugate $f^{*}$ or converse.
\begin{lem}(\cite{bauschkebook2011})\label{lem-moreau-equality}
For any $\lambda >0$ and a vector $u$, the Moreau equality takes the form of
\begin{equation}
prox_{\lambda f}(u) + \lambda prox_{\frac{1}{\lambda}f^{*}}(\frac{1}{\lambda}u) = u.
\end{equation}
\end{lem}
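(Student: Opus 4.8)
The plan is to derive the identity from the first-order optimality (Fermat) characterization of the proximity operator, combined with the conjugate subdifferential equivalence $u\in\partial f(x)\Leftrightarrow x\in\partial f^{*}(u)$ recorded just after the definition of the Fenchel conjugate. The point is that $prox_{\lambda f}(u)$ is defined as the minimizer of a strongly convex objective, so it is characterized by an inclusion of the form $\tfrac1\lambda(u-p)\in\partial f(p)$; flipping this inclusion with the conjugate rule turns it into the Fermat condition for the proximity operator of $f^{*}$, and matching the two conditions yields the equality.

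Concretely, I would set $p:=prox_{\lambda f}(u)$. The function $x\mapsto \tfrac12\|x-u\|^{2}+\lambda f(x)$ is proper, lsc and strongly convex, hence has a unique minimizer $p$; since the quadratic summand is finite everywhere, the subdifferential sum rule stated above applies with no qualification needed, so $0\in p-u+\lambda\,\partial f(p)$, that is, $\tfrac1\lambda(u-p)\in\partial f(p)$. Because $f\in\Gamma_{0}(X)$ we have $f^{**}=f$, so the equivalence $u\in\partial f(x)\Leftrightarrow x\in\partial f^{*}(u)$ turns this into $p\in\partial f^{*}\!\bigl(\tfrac1\lambda(u-p)\bigr)$. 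Writing $q:=\tfrac1\lambda(u-p)$, so that $p=u-\lambda q$, this reads $u-\lambda q\in\partial f^{*}(q)$, i.e. $0\in q-\tfrac1\lambda u+\tfrac1\lambda\partial f^{*}(q)$. Since $f^{*}\in\Gamma_{0}(X)$, this is exactly the Fermat condition for $q$ to be the unique minimizer of $y\mapsto \tfrac12\bigl\|y-\tfrac1\lambda u\bigr\|^{2}+\tfrac1\lambda f^{*}(y)$, hence $q=prox_{\frac1\lambda f^{*}}(\tfrac1\lambda u)$. Substituting back, $prox_{\lambda f}(u)+\lambda\,prox_{\frac1\lambda f^{*}}(\tfrac1\lambda u)=p+\lambda q=p+(u-p)=u$, which is the claim.

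The only step needing a word of care --- hardly an obstacle --- is the passage through the two Fermat conditions: one must know that $f^{*}\in\Gamma_{0}(X)$, which is immediate from $f\in\Gamma_{0}(X)$, and that the subdifferential of the sum of the quadratic and $f$ (resp. $f^{*}$) splits additively, which is automatic here because the quadratic term has full domain. An essentially equivalent route would be to differentiate the classical Moreau envelope decomposition and read off the gradients via Lemma \ref{lem-differe-moreau-yosida}, but the optimality-condition argument above is shorter and entirely self-contained.
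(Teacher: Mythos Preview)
Your argument is correct and is the standard derivation of the Moreau decomposition via the Fermat optimality condition together with the equivalence $u\in\partial f(x)\Leftrightarrow x\in\partial f^{*}(u)$. Note, however, that the paper does not actually supply a proof of this lemma: it is stated as a known result with a citation to \cite{bauschkebook2011}, so there is no ``paper's own proof'' to compare against. Your write-up therefore goes beyond what the paper provides, and does so cleanly; the only minor remark is that the subdifferential sum rule you invoke is indeed trivially valid here (the quadratic has full domain), so no qualification condition is needed, exactly as you note.
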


By the definition of convex conjugate and proximity operator, it is easy to prove the following result.
\begin{lem}\label{lem-proximity-conjugate}
Let $\lambda >0$ and $f\in \Gamma_{0}(X)$, for any $u\in X$, then
\begin{equation}
prox_{(\lambda f)^{*}}(u) = \lambda prox_{\frac{1}{\lambda}f^{*}}(\frac{1}{\lambda}u).
\end{equation}
\end{lem}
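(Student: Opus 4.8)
The plan is to reduce the identity to the definition of the Fenchel conjugate together with the strong convexity of the proximal objective, and to give the shortest route via Moreau's equality (Lemma \ref{lem-moreau-equality}).

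First I would record the elementary scaling rule for conjugates under multiplication by a positive scalar: for $\lambda>0$ and any $v\in X$,
\[
(\lambda f)^{*}(v)=\sup_{x}\{\langle x,v\rangle-\lambda f(x)\}=\lambda\sup_{x}\Big\{\big\langle x,\tfrac{1}{\lambda}v\big\rangle-f(x)\Big\}=\lambda f^{*}\big(\tfrac{1}{\lambda}v\big),
\]
which is immediate from the definition and the positivity of $\lambda$. Since $f\in\Gamma_{0}(X)$, the functions $\lambda f$, $f^{*}$ and $(\lambda f)^{*}$ all lie in $\Gamma_{0}(X)$, so the corresponding proximal objectives are proper, lsc and strongly convex; hence $prox_{(\lambda f)^{*}}(u)$ and $prox_{\frac{1}{\lambda}f^{*}}(\frac{1}{\lambda}u)$ each denote a unique point, and manipulating "$\arg\min$" is legitimate.

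For the proof itself I would combine two instances of Moreau's equality. Applying Lemma \ref{lem-moreau-equality} with the scalar $1$ to the function $\lambda f$ gives $prox_{\lambda f}(u)+prox_{(\lambda f)^{*}}(u)=u$, while Lemma \ref{lem-moreau-equality} applied directly to $f$ with the scalar $\lambda$ gives $prox_{\lambda f}(u)+\lambda\,prox_{\frac{1}{\lambda}f^{*}}(\frac{1}{\lambda}u)=u$. Subtracting the two relations eliminates $prox_{\lambda f}(u)$ and leaves exactly $prox_{(\lambda f)^{*}}(u)=\lambda\,prox_{\frac{1}{\lambda}f^{*}}(\frac{1}{\lambda}u)$. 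Alternatively, one can argue directly: substituting the scaling rule into the definition of $prox_{(\lambda f)^{*}}(u)$ and changing variables $v=\lambda w$ turns the minimization into $\lambda\cdot\arg\min_{w}\big\{\tfrac{\lambda^{2}}{2}\|w-\tfrac{1}{\lambda}u\|^{2}+\lambda f^{*}(w)\big\}$; dividing the bracketed function by the constant $\lambda>0$ does not move the minimizer, and the result is $\lambda\big(\tfrac12\|w-\tfrac1\lambda u\|^{2}+\tfrac1\lambda f^{*}(w)\big)$, whose unique minimizer is $prox_{\frac{1}{\lambda}f^{*}}(\frac1\lambda u)$ by definition.

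I do not anticipate any real obstacle here: the only points needing (minor) care are the use of $\lambda>0$ in the conjugate scaling step and the uniqueness of the proximal minimizers, both of which follow from $f\in\Gamma_{0}(X)$. I would present the Moreau-equality argument as the main proof, since it is the most compact, and possibly append the direct change-of-variables computation as an alternative.
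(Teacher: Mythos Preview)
Your proposal is correct. Your \emph{alternative} change-of-variables argument is exactly the paper's own proof: the paper first records the scaling rule $(\lambda f)^{*}(u)=\lambda f^{*}(u/\lambda)$, substitutes it into the proximal objective, performs the substitution $x'=x/\lambda$, and reads off the minimizer as $\lambda\,prox_{\frac{1}{\lambda}f^{*}}(u/\lambda)$.

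Your \emph{main} route via two instances of Moreau's equality (Lemma \ref{lem-moreau-equality} applied once to $\lambda f$ with scalar $1$, and once to $f$ with scalar $\lambda$, then subtracting) is a genuinely different and shorter argument that the paper does not use. It avoids the explicit change of variables and any manipulation of $\arg\min$, at the cost of invoking Lemma \ref{lem-moreau-equality} as a black box; the paper's direct computation is more self-contained but longer. Both are fine; if you present the Moreau-equality proof, the scaling identity $(\lambda f)^{*}(v)=\lambda f^{*}(v/\lambda)$ is not actually needed, so you may drop it or keep it only for the alternative argument.
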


\begin{proof}
By the definition of convex conjugate, for any $u\in X$, we have
\begin{align}\label{eq2-7}
(\lambda f)^{*}(u) & = \sup_{x} \langle x,u \rangle - (\lambda f)(x), \nonumber \\
& = \lambda \sup_{x} \langle x, \frac{1}{\lambda}u \rangle - f(x), \nonumber  \\
& = \lambda f^{*}(\frac{1}{\lambda}u).
\end{align}
It follows from the above relation and also together with the definition of proximity operator, we get
\begin{align}
prox_{(\lambda f)^{*}}(u) & = \arg\min_{x}\ \{  \frac{1}{2}\| x-u  \|^2 + (\lambda f)^{*}(x)   \}, \nonumber \\
& = \arg\min_{x}\ \{ \frac{1}{2}\|x-u\|^2 + \lambda f^{*}(\frac{1}{\lambda}x)   \}.
\end{align}
Let $x' = \frac{1}{\lambda}x$, then
\begin{align}
\quad & \min_{x}\ \frac{1}{2}\|x-u\|^2 + \lambda f^{*}(\frac{1}{\lambda}x), \nonumber \\
= & \min_{x'}\ \frac{1}{2}\| \lambda x' -u \|^2 + \lambda f^{*}(x') \nonumber \\
= & \min_{x'}\ \lambda^2 ( \frac{1}{2}\|x' - \frac{1}{\lambda}u\|^2 + \frac{1}{\lambda}f^{*}(x')  ).
\end{align}
Therefore, $prox_{(\lambda f)^{*}}(u) = \lambda prox_{\frac{1}{\lambda}f^{*}}(\frac{1}{\lambda}u)$.
\end{proof}

We shall make full use of the following lemmas to derive our iterative algorithm. The lemma was proved in \cite{combettesSVA2010}. See also \cite{bauschkebook2011}.

\begin{lem}(\cite{combettesSVA2010})\label{key-lemma}
Let $g\in \Gamma_{0}(X)$ and $h\in \Gamma_{0}(Y)$. Let $u\in X$ and $r\in Y$. Let $B: X\rightarrow Y$ be a bounded linear operator satisfying the following condition:$r\in \emph{int} (B(\emph{dom}\  g) - \emph{dom}\  h)$.
Consider the minimization problem of
\begin{equation}\label{lem-three-sum}
\min_{x\in X}\ \frac{1}{2}\| x- u \|^{2} +  g(x) +  h(Bx-r),
\end{equation}
Then the following hold:

\noindent \emph{(i)} The dual problem of (\ref{lem-three-sum}) is
\begin{equation}\label{lem-dual-problem}
\begin{aligned}
\max_{y\in Y}\ & - \frac{1}{2}\|  B^{*}y - u \|^2 + \widetilde{g}_{1}(u-B^{*}y) -  h^{*}(y) - \langle y, r\rangle + \frac{1}{2}\|u\|^2.
\end{aligned}
\end{equation}

\noindent \emph{(ii)} Let $y^{*}$ is an optimal solution of the dual problem (\ref{lem-dual-problem}) and $x^{*} = prox_{ g}(u -  B^{*}y^{*})$. Then, $x^{*}$ is an optimal solution of the primal problem (\ref{lem-three-sum}).

\noindent \emph{(iii)} The optimal value of the primal problem (\ref{lem-three-sum}) is equal to the optimal value of the dual problem (\ref{lem-dual-problem}).

\end{lem}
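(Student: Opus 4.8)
The plan is to treat (\ref{lem-three-sum}) as a Fenchel--Rockafellar primal problem $\min_{x\in X}F(x)+G(Bx)$ with $F(x)=\tfrac12\|x-u\|^2+g(x)\in\Gamma_{0}(X)$ and $G(z)=h(z-r)\in\Gamma_{0}(Y)$, and to dualize it. First I would use biconjugation: since $h\in\Gamma_{0}(Y)$ we have $h^{**}=h$, so $h(Bx-r)=\sup_{y\in Y}\{\langle y,Bx-r\rangle-h^{*}(y)\}$ and (\ref{lem-three-sum}) becomes the saddle-point problem
\[
\min_{x\in X}\ \sup_{y\in Y}\ \Big\{\tfrac12\|x-u\|^2+g(x)+\langle y,Bx-r\rangle-h^{*}(y)\Big\}.
\]
Because the quadratic term is everywhere finite, $\mathrm{dom}\,F=\mathrm{dom}\,g$ and $\mathrm{dom}\,G=r+\mathrm{dom}\,h$, so the hypothesis $r\in\mathrm{int}\big(B(\mathrm{dom}\,g)-\mathrm{dom}\,h\big)$ is precisely the constraint qualification $0\in\mathrm{int}\big(\mathrm{dom}\,G-B(\mathrm{dom}\,F)\big)$. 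Under it, Fenchel--Rockafellar strong duality applies: the primal and dual optimal values coincide (this is (iii)), the dual value is attained, and the $\inf_{x}$ and $\sup_{y}$ above may be interchanged.

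After interchanging, I would evaluate the inner infimum in $x$ for fixed $y$ by completing the square. Using $\langle y,Bx\rangle=\langle B^{*}y,x\rangle$ one gets
\[
\tfrac12\|x-u\|^2+\langle B^{*}y,x\rangle=\tfrac12\big\|x-(u-B^{*}y)\big\|^2-\tfrac12\|B^{*}y-u\|^2+\tfrac12\|u\|^2,
\]
hence, by the definition of the Moreau envelope with $\lambda=1$,
\[
\inf_{x\in X}\Big\{\tfrac12\|x-u\|^2+g(x)+\langle B^{*}y,x\rangle\Big\}=\widetilde{g}_{1}(u-B^{*}y)-\tfrac12\|B^{*}y-u\|^2+\tfrac12\|u\|^2,
\]
the infimum being attained at $x=prox_{g}(u-B^{*}y)$. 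Substituting this back and keeping the leftover terms $-\langle y,r\rangle-h^{*}(y)$ reproduces exactly the objective of (\ref{lem-dual-problem}); taking $\sup$ over $y$ proves (i).

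For (ii), I would argue with the saddle point furnished by strong duality. Since the primal objective is strongly convex it has a unique minimizer $\widehat x$, and the dual is attained at some $y^{*}$ with the same value, so $(\widehat x,y^{*})$ is a saddle point of the Lagrangian $L(x,y)=\tfrac12\|x-u\|^2+g(x)+\langle y,Bx-r\rangle-h^{*}(y)$. In particular $\widehat x$ minimizes the strictly convex map $x\mapsto L(x,y^{*})=\tfrac12\|x-(u-B^{*}y^{*})\|^2+g(x)+(\text{const})$, which forces $\widehat x=prox_{g}(u-B^{*}y^{*})$. Thus for \emph{any} dual optimal $y^{*}$ the point $x^{*}:=prox_{g}(u-B^{*}y^{*})$ coincides with the primal minimizer $\widehat x$ and hence solves (\ref{lem-three-sum}); equivalently, one may verify directly that $u-B^{*}y^{*}-x^{*}\in\partial g(x^{*})$ (characterization of the prox) together with $y^{*}\in\partial h(Bx^{*}-r)$ (first-order optimality of $y^{*}$ in (\ref{lem-dual-problem}), using that $\widetilde{g}_{1}$ is smooth with $\nabla\widetilde{g}_{1}(z)=z-prox_{g}(z)$) yields $0\in\partial\big(\tfrac12\|\cdot-u\|^2+g+h(B\cdot-r)\big)(x^{*})$ by the subdifferential sum rule, valid under the same qualification.

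The step I expect to be the real obstacle is the justification of strong duality and of the $\inf$--$\sup$ interchange under exactly the stated hypothesis: checking that $r\in\mathrm{int}(B(\mathrm{dom}\,g)-\mathrm{dom}\,h)$ is indeed the correct qualification for the Fenchel--Rockafellar (Attouch--Br\'ezis) theorem applied to this particular pair $F,G$, and that it also licenses the subdifferential sum/chain rule used in (ii). Everything else --- completing the square, recognizing $\widetilde{g}_{1}$ and $prox_{g}$, and bookkeeping the constant $\tfrac12\|u\|^2$ --- is routine.
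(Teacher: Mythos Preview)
The paper does not actually prove this lemma; it is stated with a citation to \cite{combettesSVA2010} (and \cite{bauschkebook2011}) and used as a black box. So there is no ``paper's own proof'' to compare against.

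That said, your argument is correct and is essentially the standard Fenchel--Rockafellar derivation one would expect in the cited source. The identification $\mathrm{dom}\,F=\mathrm{dom}\,g$, $\mathrm{dom}\,G=r+\mathrm{dom}\,h$ and hence $0\in\mathrm{int}(B(\mathrm{dom}\,F)-\mathrm{dom}\,G)\Leftrightarrow r\in\mathrm{int}(B(\mathrm{dom}\,g)-\mathrm{dom}\,h)$ is the right qualification check; the completion of the square and the recognition of $\widetilde{g}_{1}$ and $prox_{g}$ are accurate; and the saddle-point argument for (ii), using strict convexity of $x\mapsto L(x,y^{*})$ to force $\widehat x=prox_{g}(u-B^{*}y^{*})$, is clean. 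Your alternative route for (ii) via the subdifferential calculus (using $\nabla\widetilde{g}_{1}(z)=z-prox_{g}(z)$ from Lemma~\ref{lem-differe-moreau-yosida}) is also valid under the same qualification. The only point worth tightening is a minor one: you should state explicitly that the primal infimum is finite (it is, since $F\in\Gamma_{0}(X)$ is coercive by the quadratic term, so a minimizer exists), as Fenchel--Rockafellar gives equality of values and dual attainment assuming the primal value is not $-\infty$.
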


The following lemma can be found in \cite{liqia2016}.

\begin{lem}(\cite{liqia2016})\label{key-lemmalemma}
Let $f_1 \in \Gamma_{0}(X)$ and $f_2 \in \Gamma_{0}(Y)$. Let $B:X\rightarrow Y$ be a bounded linear operator such that $0\in \emph{int} (B(\emph{dom}\  f_1) - \emph{dom}\  f_2)$. Consider the following general optimization problem
\begin{equation}\label{key-lemma2}
\min_{x}\ f_{1}(x) + f_2 (Bx).
\end{equation}
Let $x$ is a solution of (\ref{key-lemma2}), then for any $\sigma>0, \tau >0$, there exists a vector $y\in Y$ such that
\begin{equation}
\begin{aligned}
x & = prox_{\tau f_1} (x- \tau B^{*}y),\\
y & = prox_{\sigma f_{2}^{*}} ( y + \sigma Bx).
\end{aligned}
\end{equation}
Conversely, if there exists $\sigma>0, \tau >0$, $x\in X$ and $y\in Y$ satisfying (\ref{key-lemma2}), then $x$ is a solution of (\ref{key-lemma2}).
\end{lem}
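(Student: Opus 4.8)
The plan is to characterize the minimizers of $f_1+f_2\circ B$ by combining Fermat's rule with the subdifferential chain rule, and then to translate the resulting subdifferential inclusions into the fixed-point equations for the proximity operators by means of the elementary identity $prox_{\lambda f}=(I+\lambda\partial f)^{-1}$ (which itself follows at once from writing out the optimality condition of the strongly convex problem defining $prox_{\lambda f}$).

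First I would handle the forward implication. Suppose $x$ solves $(\ref{key-lemma2})$. Because $0\in\emph{int}(B(\emph{dom}\,f_1)-\emph{dom}\,f_2)$, the subdifferential chain rule (the first lemma of this section, \cite{Zalinescu2012}) applies and gives $\partial(f_1+f_2\circ B)=\partial f_1+B^{*}\circ\partial f_2\circ B$. Hence Fermat's rule $0\in\partial(f_1+f_2\circ B)(x)$ produces a vector $y\in\partial f_2(Bx)$ with $-B^{*}y\in\partial f_1(x)$. For any $\tau>0$, the inclusion $-B^{*}y\in\partial f_1(x)$ is equivalent to $x-\tau B^{*}y\in (I+\tau\partial f_1)(x)$, i.e. $x=(I+\tau\partial f_1)^{-1}(x-\tau B^{*}y)=prox_{\tau f_1}(x-\tau B^{*}y)$. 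For the dual variable I use the conjugacy relation $u\in\partial f(x)\Leftrightarrow x\in\partial f^{*}(u)$ (recalled above) to rewrite $y\in\partial f_2(Bx)$ as $Bx\in\partial f_2^{*}(y)$; then for any $\sigma>0$ this is equivalent to $y+\sigma Bx\in(I+\sigma\partial f_2^{*})(y)$, that is, $y=prox_{\sigma f_2^{*}}(y+\sigma Bx)$. Since $\tau$ and $\sigma$ were arbitrary, the two equations hold for every choice of positive step sizes, which is exactly the asserted conclusion.

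For the converse, assume there exist $\sigma,\tau>0$, $x\in X$, $y\in Y$ satisfying the two fixed-point equations. Reversing the equivalences above, $x=prox_{\tau f_1}(x-\tau B^{*}y)$ yields $-B^{*}y\in\partial f_1(x)$, while $y=prox_{\sigma f_2^{*}}(y+\sigma Bx)$ yields $Bx\in\partial f_2^{*}(y)$, hence $y\in\partial f_2(Bx)$ by $f_2^{**}=f_2$ and the conjugacy relation. Adding, $0=-B^{*}y+B^{*}y\in\partial f_1(x)+B^{*}\partial f_2(Bx)$, and invoking the chain rule once more (again legitimate under the constraint qualification) gives $0\in\partial(f_1+f_2\circ B)(x)$, so $x$ solves $(\ref{key-lemma2})$.

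There is no deep obstacle here; the points that need care are (i) that the constraint qualification $0\in\emph{int}(B(\emph{dom}\,f_1)-\emph{dom}\,f_2)$ is used in \emph{both} directions — to split the subdifferential in the forward direction and to recombine it in the converse — and (ii) that the step sizes $\tau,\sigma$ are inessential, since each fixed-point equation is, for \emph{every} positive step size, merely a restatement of one and the same subdifferential inclusion. One should also note that $prox_{\tau f_1}$ and $prox_{\sigma f_2^{*}}$ are single-valued (as $f_1,f_2^{*}\in\Gamma_0$), which is what permits passing freely between the resolvent inclusions and the equalities stated in the lemma.
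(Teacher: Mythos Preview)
Your argument is correct and is the standard way to establish this characterization. The paper itself does not supply a proof of this lemma; it merely quotes it from \cite{liqia2016}, so there is no in-paper proof to compare against.

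One small refinement: your remark (i) overstates the role of the constraint qualification. In the converse direction you only need the inclusion
\[
\partial f_1(x)+B^{*}\partial f_2(Bx)\ \subseteq\ \partial\bigl(f_1+f_2\circ B\bigr)(x),
\]
which holds for all proper convex functions without any qualification. The hypothesis $0\in\emph{int}\bigl(B(\emph{dom}\,f_1)-\emph{dom}\,f_2\bigr)$ is genuinely required only in the forward direction, to guarantee that the subdifferential of the sum splits and hence that a dual vector $y$ with the stated properties actually exists. This does not affect the validity of your proof, only the sharpness of the commentary.
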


\section{A forward-backward splitting method to solve (\ref{three-sum})}
\label{forward-backward-dual-primal-dual}

In this section, we will propose efficient iterative algorithms to solve the optimization problem (\ref{three-sum}). We make the following two assumptions throughout the paper: (1) The minimizers of the optimization problem (\ref{three-sum}) is always exists; (2) $0 \in \emph{int} (B(\emph{dom}\ g) - \emph{dom}\ h)$.
As we have mentioned in the introduction, since the function $f(x)$ in the optimization problem (\ref{three-sum}) is smooth, we can employ the forward-backward splitting algorithm with errors \cite{combettes2005} and obtain the iteration scheme as follows. For any $x^{0}\in X$,
\begin{equation}\label{main-forward-backward}
x^{k+1} = prox_{\gamma (g + h \circ B)}(x^k - \gamma \nabla f(x^{k})) + e_k,
\end{equation}
where $\gamma \in (0, 2/L)$, $L$ is the Lipschitz constant of $\nabla f$, $e_k$ denotes the error between $x^{k+1}$ and the proximity operator $prox_{\gamma (g + h \circ B)}(x^k - \gamma \nabla f(x^{k}))$. The following convergence theorem was proved in \cite{combettes2005}.
\begin{thm}(\cite{combettes2005})\label{main-convergence-theorem}
Let $\gamma \in (0, 2/L)$, $L$ is the Lipschitz constant of $\nabla f$. For any $x^{0}\in X$,  the iterative sequence $\{x^{k}\}$ is defined by (\ref{main-forward-backward}). We assume that
 $\sum_{n=0}^{\infty}\|e_k\| < +\infty$. Then the iterative sequence $\{x^k\}$ converges weakly to a solution of the optimization problem (\ref{three-sum}).
\end{thm}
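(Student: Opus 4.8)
The plan is to recast the recursion (\ref{main-forward-backward}) as an inexact Krasnoselskii--Mann iteration of an averaged operator and then invoke the standard convergence theory for such iterations; this is precisely the route taken in \cite{combettes2005}, and I would follow it. \textbf{Reformulation.} Put $\psi:=g+h\circ B$. Since $g\in\Gamma_0(X)$, $h\in\Gamma_0(Y)$ and $B$ is bounded linear, $\psi\in\Gamma_0(X)$, so $prox_{\gamma\psi}$ is well defined and firmly nonexpansive. Define $T:=prox_{\gamma\psi}\circ(I-\gamma\nabla f)$. Using the resolvent identity $prox_{\gamma\psi}=(I+\gamma\partial\psi)^{-1}$ one checks that $x^{\star}=Tx^{\star}$ if and only if $0\in\nabla f(x^{\star})+\partial\psi(x^{\star})$; by the subdifferential sum rule recalled in Section \ref{basic_notation} (applicable under assumption (2)) this is in turn equivalent to $0\in\nabla f(x^{\star})+\partial g(x^{\star})+B^{*}\partial h(Bx^{\star})$, i.e. $x^{\star}$ solves (\ref{three-sum}). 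Assumption (1) then gives $\mathrm{Fix}\,T\neq\emptyset$.

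Next I would show that $T$ is averaged. Since $f$ is convex with $L$-Lipschitz gradient, the Baillon--Haddad theorem gives that $\nabla f$ is $(1/L)$-cocoercive; hence for $\gamma\in(0,2/L)$ the forward map $I-\gamma\nabla f$ is $\alpha_{1}$-averaged with $\alpha_{1}=\gamma L/2\in(0,1)$. As $prox_{\gamma\psi}$ is $\tfrac12$-averaged, the composition calculus for averaged operators shows that $T$ is $\alpha$-averaged for some $\alpha\in(0,1)$; equivalently, for all $x,y\in X$,
\[
\|Tx-Ty\|^{2}\le\|x-y\|^{2}-\frac{1-\alpha}{\alpha}\big\|(I-T)x-(I-T)y\big\|^{2}.
\]

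Finally I would handle the convergence of the perturbed iteration. Write (\ref{main-forward-backward}) as $x^{k+1}=Tx^{k}+e_{k}$. Fixing $x^{\star}\in\mathrm{Fix}\,T$ and using the displayed inequality with $y=x^{\star}$, one gets $\|x^{k+1}-x^{\star}\|\le\|x^{k}-x^{\star}\|+\|e_{k}\|$, so the hypothesis $\sum_{k}\|e_{k}\|<\infty$ forces $(\|x^{k}-x^{\star}\|)_{k}$ to converge (quasi-Fej\'er monotonicity); in particular $\{x^{k}\}$ is bounded. Re-using the inequality and telescoping also yields $\sum_{k}\|(I-T)x^{k}\|^{2}<\infty$, hence $(I-T)x^{k}\to0$. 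Demiclosedness of $I-T$ at $0$ then forces every weak sequential cluster point of $\{x^{k}\}$ to lie in $\mathrm{Fix}\,T$. Opial's lemma closes the argument: $\{x^{k}\}$ converges weakly to a point of $\mathrm{Fix}\,T$, that is, to a solution of (\ref{three-sum}).

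\textbf{Main obstacle.} The work in the first two steps is essentially mechanical bookkeeping (Baillon--Haddad, the averagedness calculus, Fermat's rule). The delicate point is the last step: one must verify that the quasi-Fej\'er estimate survives the additive perturbation $e_{k}$ and still produces $(I-T)x^{k}\to0$, and this is exactly where the summability $\sum_{k}\|e_{k}\|<\infty$ — rather than the weaker $e_{k}\to0$ — is indispensable. This is the heart of the inexact Krasnoselskii--Mann theorem that the statement relies on.
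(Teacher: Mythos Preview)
The paper does not supply its own proof of this theorem: it is stated with the citation to \cite{combettes2005} and then used as a black box throughout Sections~\ref{forward-backward-dual-primal-dual} and~\ref{three-operator-dual-primal-dual}. Your outline is correct and is precisely the argument one finds in that reference---rewrite the inexact forward--backward step as a perturbed Krasnoselskii--Mann iteration of the averaged operator $T=prox_{\gamma\psi}\circ(I-\gamma\nabla f)$ (Baillon--Haddad plus the composition rule for averaged maps), then run quasi-Fej\'er monotonicity with summable errors, asymptotic regularity, demiclosedness of $I-T$, and Opial. One small remark: when you telescope to obtain $\sum_k\|(I-T)x^k\|^2<\infty$, make the error bookkeeping explicit---expand $\|Tx^k+e_k-x^\star\|^2$, use boundedness of $\{x^k\}$ to control the cross term $2\langle Tx^k-x^\star,e_k\rangle$ by a summable sequence, and only then sum; the one-line triangle-inequality estimate you wrote gives Fej\'er monotonicity but not the squared-residual summability by itself.
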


Since the proximity operator of the function $\gamma (g+h \circ B)$ has no closed-form solution, so we cannot obtain the exact value of $x^{k+1}$ with the error vector $e_k =0$. However, if we can obtain a sufficient approximation of the proximity operator so that the error vector $e_k$ satisfies the requirement of Theorem \ref{main-forward-backward}, the obtained iterative sequences will still converge to the solution of the optimization problem (\ref{three-sum}).
To get the updated sequences $\{x^{k+1}\}$, the key problem is to efficient compute the proximity operator of $\gamma (g + h \circ B)$, i.e., $prox_{\gamma (g + h \circ B)}$. In the following, we consider two efficient approaches to solving the proximity operator of function $\gamma (g + h \circ B)$ at the point $x^k - \gamma \nabla f(x^{k})$: one is from dual and the other is from primal-dual. Both of the two methods can produce an approximation solution of the proximity operator $prox_{\gamma (g + h \circ B)}(x^k - \gamma \nabla f(x^{k}))$  and the error will satisfy the requirement of Theorem \ref{main-convergence-theorem}.

\subsection{Dual approach}

First, we consider the dual approach. Recall that the proximity operator of $prox_{\gamma (g + h \circ B)}(x^k - \gamma \nabla f(x^{k}))$ is the minimizer of the following optimization problem
\begin{equation}\label{eq3-1}
\min_{x\in X}\ \left\{\frac{1}{2}\| x - (x^k - \gamma \nabla f(x^{k})) \|^2  + \gamma g(x) + \gamma h(Bx)\right\},
\end{equation}
which is a special case of (\ref{lem-three-sum}).
In Lemma \ref{key-lemma}, let $r=0$, $u = x^k - \gamma \nabla f(x^{k})$. Define $g := \gamma g$ and $h := \gamma h$. Then we obtain the dual formulation of the minimization problem (\ref{eq3-1}) is
\begin{align}\label{eq3-2}
& \max_{y\in Y}\  -\frac{1}{2} \|B^{*}y - u\|^2 + \gamma \widetilde{g}_{\gamma}(u-B^{*}y) -  (\gamma h)^{*}(y) + \frac{1}{2}\|u\|^2 \nonumber \\
& = \max_{y\in Y}\ -\frac{1}{2} \|B^{*}y - u\|^2 + \gamma \widetilde{g}_{\gamma}(u-B^{*}y) -  \gamma h^{*}(\frac{1}{\gamma}y) + \frac{1}{2}\|u\|^2.
\end{align}
The equality in (\ref{eq3-2}) is due to the fact that $(\gamma h)^{*}(y) = \gamma h^{*}(\frac{1}{\gamma}y)$. Let $y$ is a solution of the dual problem (\ref{eq3-2}), by Lemma \ref{key-lemma} (ii), we know that $x=prox_{\gamma g}(u-B^{*}y)$ is the optimal solution of (\ref{eq3-1}). In (\ref{eq3-2}), let $y' = \frac{1}{\gamma}y$, then the dual problem (\ref{eq3-2}) reduces to
 \begin{equation}\label{eq3-2-1}
 \max_{y'\in Y}\  \gamma^2 (-\frac{1}{2} \|B^{*}y' - \frac{1}{\gamma}u\|^2 + \frac{1}{\gamma} \widetilde{g}_{\gamma}(u- \gamma B^{*}y') -  \frac{1}{\gamma} h^{*}(y') + \frac{1}{2\gamma^2}\|u\|^2 ).
\end{equation}
The corresponding optimal solution $x$ of (\ref{eq3-1}) becomes $x=prox_{\gamma g}(u- \gamma B^{*}y')$.
Let $F(y') = \frac{1}{2} \|B^{*}y' - \frac{1}{\gamma}u\|^2 - \frac{1}{\gamma} \widetilde{g}_{\gamma}(u- \gamma B^{*}y') $, then the optimal solution of (\ref{eq3-2-1}) is equal to the minimizer of the following minimization problem
\begin{equation}\label{eq3-3}
\min_{y'\in Y}\ F(y') + \frac{1}{\gamma}h^{*}(y').
\end{equation}
By Lemma \ref{lem-differe-moreau-yosida}, we have $\nabla F(y') = -\frac{1}{\gamma}B prox_{\gamma g}(u - \gamma B^{*}y')$ and $\nabla F(y')$ is Lipschitz continuous with Lipschitz constant $\lambda_{max}(BB^{*})$, where $\lambda_{max}(BB^{*})$ denotes the spectral radius of operator $BB^{*}$. In fact, since the proximity operator is nonexpansive, then for any $y_1, y_2\in Y$, we have
\begin{align}
\| \nabla F(y_1) - \nabla F(y_2)    \| & = \| \frac{1}{\gamma}Bprox_{\gamma g}(u - \gamma B^{*}y_1) - \frac{1}{\gamma}B prox_{\gamma g}(u - \gamma B^{*}y_2)   \| \nonumber \\
& \leq \frac{1}{\gamma} \|B\| \| prox_{\gamma g}(u - \gamma B^{*}y_1) - prox_{\gamma g}(u - \gamma B^{*}y_2)   \| \nonumber \\
& \leq \frac{1}{\gamma}\|B\| \|  (\gamma B^{*}y_1 - \gamma B^{*}y_2) \| \nonumber \\
& \leq \lambda_{max}(BB^{*}) \| y_1 - y_2 \|.
\end{align}
Therefore, we obtain the following iteration scheme to solve (\ref{eq3-3}), which is based on the forward-backward splitting algorithm. For any $y^{0}\in Y$, choose $0 < \lambda < 2/\lambda_{max}(B^{*}B)$,
\begin{equation}
y^{j_k +1} = prox_{\frac{\lambda}{\gamma}h^{*}} (y^{j_k}  + \frac{\lambda}{\gamma}B prox_{\gamma g}(u - \gamma B^{*}y^{j_k}) ),\ j_k = 0, 1, 2, \cdots.
\end{equation}
 In conclusion, we propose the following iterative algorithm  to solve the optimization problem (\ref{three-sum}). The iterative algorithm includes an outer iteration step and an inner iteration step, respectively.

\begin{algorithm}[H]
\caption{A dual forward-backward splitting  algorithm for solving the optimization problem (\ref{three-sum})}
\begin{algorithmic}\label{dual-forward-backward}
\STATE \textbf{Initialize:}  Given arbitrary $x^{0}\in X$ and $y^{0}\in Y$. Choose $\gamma \in (0,2/L)$ and $\lambda \in (0 , 2/\lambda_{max}(BB^{*}))$.
\STATE 1. (Outer iteration step)\ For $k=0, 1, 2, \cdots$
\STATE \quad $u^{k} = x^k - \gamma \nabla f(x^k)$;
\STATE 2. (Inner iteration step)\ For $j_k = 0, 1, 2, \cdots$
\STATE \quad $y^{j_k +1} = prox_{\frac{\lambda}{\gamma}h^{*}} (y^{j_k}  + \frac{\lambda}{\gamma}B prox_{\gamma g}(u^{k} - \gamma B^{*}y^{j_k}) )$;
\STATE  End inner iteration step when the stopping criteria reached. Output: $y^{J_k}$.
\STATE 3. Update $x^{k+1} = prox_{\gamma g}(u^k - \gamma B^{*}y^{J_k})$;
\STATE 4. End the outer iteration step when some stopping criteria reached.
\end{algorithmic}
\end{algorithm}

We prove the convergence of Algorithm \ref{dual-forward-backward} in finite-dimensional Hilbert spaces.

\begin{thm}\label{convergence-dual-forward-backward}
Let $\gamma \in (0,2/L)$ and $\lambda \in (0 , 2/\lambda_{max}(BB^{*}))$. For any $x^{0}\in X$ and $y^{0}\in Y$, the iterative sequence $\{x^k\}$ and $\{y^k\}$ are generated by Algorithm \ref{dual-forward-backward}. Then  the iterative sequence $\{x^k\}$ converges to a solution of the optimization problem (\ref{three-sum}).

\end{thm}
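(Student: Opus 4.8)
The plan is to read Algorithm \ref{dual-forward-backward} as an \emph{inexact} forward-backward iteration of the type (\ref{main-forward-backward}) for problem (\ref{three-sum}), and then to verify the summability hypothesis $\sum_{k}\|e_k\| < +\infty$ of Theorem \ref{main-convergence-theorem} by controlling the accuracy of the inner loop. Write $u^k = x^k - \gamma\nabla f(x^k)$ and let $\hat x^{k+1} := prox_{\gamma(g + h\circ B)}(u^k)$ denote the exact proximal point. Then the outer update of the algorithm is $x^{k+1} = \hat x^{k+1} + e_k$ with $e_k := x^{k+1} - \hat x^{k+1}$, which is exactly (\ref{main-forward-backward}) since $\gamma \in (0, 2/L)$. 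Hence it suffices to: (a) express $\hat x^{k+1}$ through a solution of the dual problem (\ref{eq3-3}); (b) show that the inner forward-backward sequence converges to such a solution; and (c) bound $\|e_k\|$ by the inner error and choose the inner stopping rule so that $\{\|e_k\|\}_k$ is summable.

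For (a): the minimization (\ref{eq3-1}) defining $\hat x^{k+1}$ is strongly convex, hence has a unique solution, and it is the instance of (\ref{lem-three-sum}) with $r = 0$, $u = u^k$, and $g, h$ replaced by $\gamma g, \gamma h$; Assumption (2) supplies the qualification $r \in \mathrm{int}(B(\mathrm{dom}\, g) - \mathrm{dom}\, h)$. By Lemma \ref{key-lemma}, strong duality holds and the dual problem has an optimal solution; moreover, for \emph{any} such solution $y^{*}$ one has $\hat x^{k+1} = prox_{\gamma g}(u^k - B^{*}y^{*})$, which after the rescaling $y' = y/\gamma$ performed before (\ref{eq3-3}) becomes $\hat x^{k+1} = prox_{\gamma g}(u^k - \gamma B^{*}\bar y_k)$ for any solution $\bar y_k$ of (\ref{eq3-3}). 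For (b): by Lemma \ref{lem-differe-moreau-yosida}, $F$ is convex and differentiable with $\nabla F$ Lipschitz of constant $\lambda_{max}(BB^{*})$, and $\frac{1}{\gamma}h^{*} \in \Gamma_{0}(Y)$, so the inner update is exactly the forward-backward iteration for (\ref{eq3-3}) with step size $\lambda \in (0, 2/\lambda_{max}(BB^{*}))$; since the solution set of (\ref{eq3-3}) is nonempty by (a), in the finite-dimensional setting this sequence converges to some solution $\bar y_k$ of (\ref{eq3-3}) (Theorem \ref{main-convergence-theorem} applied to (\ref{eq3-3}) with zero error, together with the fact that weak convergence is strong convergence here).

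For (c): since $prox_{\gamma g}$ is nonexpansive and $\|B^{*}\| = \|B\|$,
\[
\|e_k\| = \big\| prox_{\gamma g}(u^k - \gamma B^{*}y^{J_k}) - prox_{\gamma g}(u^k - \gamma B^{*}\bar y_k) \big\| \le \gamma \|B\|\, \|y^{J_k} - \bar y_k\|,
\]
and by (b) the right-hand side tends to $0$ as $J_k \to \infty$. One therefore chooses the inner stopping criterion so that $\gamma\|B\|\,\|y^{J_k} - \bar y_k\| \le 2^{-k}$, which forces $\sum_k \|e_k\| \le \sum_k 2^{-k} < +\infty$. Theorem \ref{main-convergence-theorem} then shows that $\{x^k\}$ converges weakly — hence strongly, in the finite-dimensional space — to a solution of (\ref{three-sum}), which is the assertion.

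The step I expect to be the main obstacle is the rigorous justification of the inner stopping rule in (c): convergence of the inner forward-backward sequence only yields $\|y^{j} - \bar y_k\| \to 0$ with $\bar y_k$ not known in advance (and in general not unique), so the termination test must be expressed via a computable surrogate — typically the fixed-point residual $\|y^{j+1} - y^{j}\|$ together with an estimate linking it to the distance to the solution set of (\ref{eq3-3}) — and one must then argue that, for each fixed $k$, the prescribed tolerance is reached after finitely many inner iterations. A smaller point to be tracked is the bookkeeping of the rescaling $y' = y/\gamma$ and of the identity $(\gamma h)^{*}(y) = \gamma h^{*}(y/\gamma)$ used around (\ref{eq2-7}) and (\ref{eq3-2}), so that the constant $\gamma\|B\|$ in front of $\|y^{J_k} - \bar y_k\|$ comes out correctly.
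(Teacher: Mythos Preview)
Your proposal is correct and follows essentially the same route as the paper: interpret the outer loop as an inexact forward-backward step, use Lemma \ref{key-lemma} to express the exact proximal point via a dual solution, invoke convergence of the inner forward-backward sequence for (\ref{eq3-3}), bound $\|e_k\|$ by $\gamma\|B\|\,\|y^{J_k}-\bar y_k\|$ via nonexpansiveness of $prox_{\gamma g}$, and then choose $J_k$ so that the errors are summable (the paper uses $1/k^2$ where you use $2^{-k}$). Regarding your flagged obstacle in (c), the paper's proof is equally non-constructive on this point --- it simply asserts that for each $k$ one may pick $J_k$ large enough that $\|y^{J_k}-y\|\le 1/(\gamma\|B\|k^2)$ --- so your concern about a computable surrogate, while legitimate in practice, is not something the paper resolves either and need not be pursued to match the paper's argument.
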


\begin{proof}
Let $y$ is the optimal solution of the minimization problem (\ref{eq3-3}), it follows from Lemma \ref{key-lemma} that $x = prox_{\gamma g}(u^k - \gamma B^{*}y)$ is the optimal solution of (\ref{eq3-1}). According to the classical convergence of the forward-backward splitting algorithm, we have $\|y^k -y\| \rightarrow 0$ as $k\rightarrow \infty$. Therefore, for a given constant $1/(\gamma \|B\| k^2) >0$, there exists an integer $j_k$, when $J_k \geq j_k $, we have $\|y^{J_k}-y\| \leq 1/(\gamma \|B\| k^2)$. Notice that $x^{k+1} = prox_{\gamma g}(u^k - \gamma B^{*}y^{J_k})$, we get
\begin{align}
\|e_k\| &= \|x^{k+1} - x\| \nonumber \\
& = \| prox_{\gamma g}(u^k - \gamma B^{*}y^{J_k}) - prox_{\gamma g}(u^k - \gamma B^{*}y) \| \nonumber \\
& \leq \| \gamma B^{*}y^{J_k} - \gamma B^{*}y \| \leq \frac{1}{k^2}.
\end{align}
The first inequality comes from the fact that the proximity operator is nonexpansive and the second inequality is due to the estimation between $y^{J_k}$ and $y$. Then $\sum_{n=0}^{\infty}\|e_k\| < +\infty$. By Theorem \ref{main-convergence-theorem}, we can conclude that the iterative sequence $\{x^k\}$ converges to a solution of the optimization problem (\ref{three-sum}). This completes the proof.

\end{proof}

\subsection{Primal-dual approach}

In this part, we present a primal-dual method to solve the minimization problem (\ref{eq3-1}). We will see that the primal-dual method can also obtain an accurate approximation to the optimal solution of the minimization problem (\ref{eq3-1}).
We employ the primal-dual proximity algorithm developed in \cite{chambolleandpock2011} to solve (\ref{eq3-1}) and obtain the following iteration schemes.
For any $\overline{x}^{0}\in X$ and $y^{0}\in Y$, choose $\tau >0$ and $\sigma >0$ satisfying $\tau \sigma \|B\|^2<1$, let $j_k = 0, 1, 2, \cdots$, do
\begin{align}
\overline{x}^{j_k +1} & = prox_{\tau(  \frac{1}{2}\|\cdot - u \|^2 + \gamma g )}(\overline{x}^{j_k} - \tau B^{*}y^{j_k}), \\
y^{j_k +1} & = prox_{\sigma(\gamma h)^{*}} (y^{j_k} + \sigma B (2\overline{x}^{j_k +1} - \overline{x}^{j_k}) ).
\end{align}
After simple calculation, we obtain
\begin{align}
\overline{x}^{j_k +1} & = prox_{\frac{\tau \gamma}{1+\tau}g}(\frac{\overline{x}^{j_k} - \tau B^{*}y^{j_k} + \tau u}{1+\tau}), \\
y^{j_k +1} & = \gamma prox_{\frac{\sigma}{\gamma}h^{*}} (\frac{1}{\gamma}(y^{j_k} + \sigma B (2\overline{x}^{j_k +1} - \overline{x}^{j_k})) ).
\end{align}
Therefore, the detailed iterative algorithm is summarized in Algorithm \ref{primal-dual-forward-backward}.

\begin{algorithm}[H]
\caption{A primal-dual forward-backward splitting algorithm for solving the optimization problem (\ref{three-sum})}
\begin{algorithmic}\label{primal-dual-forward-backward}
\STATE \textbf{Initialize:}  Given arbitrary $x^{0}, \overline{x}^{0}\in X$ and $y^{0}\in Y$. Choose $\gamma \in (0,2/L)$. Let $\sigma>0$ and $\tau >0$ satisfy the
condition $\tau \sigma < \frac{1}{\|B\|^2}$.
\STATE 1. (Outer iteration step)\ For $k=0, 1, 2, \cdots$
\STATE \quad $u^{k} = x^k - \gamma \nabla f(x^k)$;
\STATE 2. (Inner iteration step)\ For $j_k = 0, 1, 2, \cdots$
\STATE \quad 2.a. $\overline{x}^{j_k +1}  = prox_{\frac{\tau \gamma}{1+\tau}g}(\frac{\overline{x}^{j_k} - \tau B^{*}y^{j_k} + \tau u^k}{1+\tau})$;
\STATE \quad 2.b. $y^{j_{k}+1} = \gamma prox_{\frac{\sigma}{\gamma}h^{*}} (\frac{1}{\gamma}(y^{j_k} + \sigma B (2\overline{x}^{j_k +1} - \overline{x}^{j_k})) )$;
\STATE \ End inner iteration when the stopping criteria reached and output $\overline{x}^{J_k}$
\STATE 3. Update $x^{k+1} = \overline{x}^{J_k}$.
\STATE 4. End the outer iteration step when some stopping criteria reached
\end{algorithmic}
\end{algorithm}

We prove the convergence of Algorithm \ref{primal-dual-forward-backward} in finite-dimensional Hilbert spaces.
\begin{thm}\label{convergence-primal-dual-forward-backward}
Let $\gamma \in (0,2/L)$. Let $\sigma>0$ and $\tau >0$ satisfy the
condition $\tau \sigma < \frac{1}{\|B\|^2}$. For any $x^{0}, \overline{x}^{0}\in X$ and $y^{0}\in Y$, the iterative sequences $\{x^k\}$, $\{\overline{x}^{k}\}$ and $\{y^k\}$ are generated by Algorithm \ref{primal-dual-forward-backward}.  Then the iterative sequence $\{x^k\}$ converges to a solution of the optimization problem (\ref{three-sum}).

\end{thm}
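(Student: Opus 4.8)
The plan is to reproduce, for Algorithm~\ref{primal-dual-forward-backward}, the same two-level argument used for Theorem~\ref{convergence-dual-forward-backward}: show that for each fixed outer index $k$ the inner loop produces an arbitrarily accurate approximation of $prox_{\gamma(g+h\circ B)}(u^k)$, conclude that the outer recursion $x^{k+1}=\overline{x}^{J_k}$ is the forward--backward iteration (\ref{main-forward-backward}) with a summable error, and then appeal to Theorem~\ref{main-convergence-theorem}.

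First fix $k$ and treat $u^k$ as a constant vector. Before the algebraic simplification, Step~2 of Algorithm~\ref{primal-dual-forward-backward} is
\[
\overline{x}^{j_k+1}=prox_{\tau(\frac12\|\cdot-u^k\|^2+\gamma g)}\bigl(\overline{x}^{j_k}-\tau B^*y^{j_k}\bigr),\qquad
y^{j_k+1}=prox_{\sigma(\gamma h)^*}\bigl(y^{j_k}+\sigma B(2\overline{x}^{j_k+1}-\overline{x}^{j_k})\bigr),
\]
which is exactly the Chambolle--Pock primal--dual proximity iteration of \cite{chambolleandpock2011} applied to $\min_{x}\,f_1(x)+f_2(Bx)$ with $f_1:=\tfrac12\|\cdot-u^k\|^2+\gamma g$ and $f_2:=\gamma h$, i.e.\ to problem (\ref{eq3-1}). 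Since $\tau\sigma\|B\|^2<1$ and, by Assumption~(2), $0\in\mathrm{int}(B(\mathrm{dom}\,g)-\mathrm{dom}\,h)=\mathrm{int}(B(\mathrm{dom}\,f_1)-\mathrm{dom}\,f_2)$ so that a saddle point exists, the convergence theorem for this algorithm gives $\overline{x}^{j_k}\to x^\ast_k$ as $j_k\to\infty$, where $x^\ast_k$ is a minimizer of (\ref{eq3-1}); the quadratic term makes $f_1$ strongly convex, hence this minimizer is unique and equals $x^\ast_k=prox_{\gamma(g+h\circ B)}(u^k)$ by the definition of the proximity operator.

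Consequently $\|\overline{x}^{j_k}-x^\ast_k\|\to 0$, so for the given constant $1/k^2>0$ there is an inner index $J_k$ with $\|\overline{x}^{J_k}-x^\ast_k\|\le 1/k^2$; this is the purpose of the inner stopping criterion in Step~2. Putting $e_k:=x^{k+1}-x^\ast_k=\overline{x}^{J_k}-prox_{\gamma(g+h\circ B)}(u^k)$ we get $\|e_k\|\le 1/k^2$, hence $\sum_{k=0}^{\infty}\|e_k\|<+\infty$. Therefore the outer recursion $x^{k+1}=prox_{\gamma(g+h\circ B)}(x^k-\gamma\nabla f(x^k))+e_k$ is precisely (\ref{main-forward-backward}) with summable error; since $\gamma\in(0,2/L)$, Theorem~\ref{main-convergence-theorem} yields that $\{x^k\}$ converges to a solution of (\ref{three-sum}).

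The main obstacle is the inner-convergence step: one must verify carefully that the simplified updates 2.a--2.b coincide with the Chambolle--Pock iteration for (\ref{eq3-1}) (the ``simple calculation'' already carried out before the algorithm), that the qualification ensuring existence of a saddle point follows from Assumption~(2), and that the limit of the inner sequence is the unique minimizer characterizing $prox_{\gamma(g+h\circ B)}(u^k)$; once this is in place the outer part is verbatim that of Theorem~\ref{convergence-dual-forward-backward}. It is also worth remarking that the tolerance $1/k^2$ may be replaced by any sequence $\varepsilon_k$ with $\sum_k\varepsilon_k<\infty$, and that warm-starting the inner variables from the previous outer step is harmless since Chambolle--Pock converges from any initialization.
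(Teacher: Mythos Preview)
Your proposal is correct and follows the same two-level strategy as the paper's own proof: establish that the inner Chambolle--Pock loop converges to the (unique) minimizer $prox_{\gamma(g+h\circ B)}(u^k)$ of (\ref{eq3-1}), pick $J_k$ so that the inner error is at most $1/k^2$, and invoke Theorem~\ref{main-convergence-theorem}. The only difference is in the bookkeeping of the error estimate: the paper first uses Lemma~\ref{key-lemmalemma} to write the primal optimum as a fixed point of the prox map in the variables $(x,y)$ and then bounds $\|e_k\|$ via nonexpansiveness of $prox_{\frac{\tau\gamma}{1+\tau}g}$ through \emph{both} $\|\overline{x}^{J_k}-x\|$ and $\tau\|B\|\|y^{J_k}-y\|$, whereas you use the primal convergence $\overline{x}^{J_k}\to x^\ast_k$ directly, exploiting the strong convexity of $f_1$ to identify the limit with $prox_{\gamma(g+h\circ B)}(u^k)$. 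Your route is slightly more economical and bypasses Lemma~\ref{key-lemmalemma}; the paper's route, on the other hand, matches more closely the structure of the analogous proofs for Algorithms~\ref{dual-forward-backward} and~\ref{primal-dual-three-operator-splitting}.
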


\begin{proof}
In Lemma \ref{key-lemmalemma}, let $f_{1}(x)=\frac{1}{2}\|x-u^k\|^2 + \gamma g(x)$, $f_2(x)=\gamma h(x)$. Let $x$ is an optimal solution of (\ref{eq3-1}), by Lemma \ref{key-lemmalemma}, for any $\tau >0$ and $\sigma >0$, there exists a vector $y$ such that
\begin{equation}
\begin{aligned}\label{eq3-3-3}
x & = prox_{\tau f_1} (x - \tau B^{*}y), \\
y & = prox_{\sigma f_2} (y + \sigma By).
\end{aligned}
\end{equation}
It follows from the definition of the proximity operator, the equation (\ref{eq3-3-3}) reduces to
\begin{equation}
\begin{aligned}
x & = prox_{\frac{\gamma \tau}{1+\tau}g} (\frac{x - \tau B^{*}y + \tau u^k}{1+\tau}),\\
y & = prox_{\sigma (\gamma h)^*} (y + \sigma By).
\end{aligned}
\end{equation}
Due to Theorem 1 of \cite{chambolleandpock2011}, we know that $\{\overline{x}^{k}\}$ and $\{y^k\}$ converge to $x$ and $y$, respectively. Therefore, for a given constant $\frac{1+\tau}{k^2}>0$, there exists an integer $j_{k}$, when $J_k \geq j_{k}$, we have $\|\overline{x}^{J_k}-x\|+\tau \|B\| \| y^{J_k}-y \| \leq (1+\tau)\frac{1}{k^2}$. Then, we obtain
\begin{align}
\| e^k \| = \| x^{k+1} - x  \| & = \| prox_{\frac{\gamma \tau}{1+\tau}g}(\frac{\overline{x}^{J_k}-\tau B^{*}y^{J_k} +\tau u^k }{1+\tau}) - prox_{\frac{\gamma \tau}{1+\tau}g}(\frac{x-\tau B^{*}y +\tau u^k }{1+\tau})    \| \nonumber \\
& \leq \| \frac{\overline{x}^{J_k}-\tau B^{*}y^{J_k} +\tau u^k }{1+\tau} - \frac{x-\tau B^{*}y +\tau u^k }{1+\tau}   \| \nonumber \\
& = \frac{1}{1+\tau}\| (\overline{x}^{J_k} - x)  - \tau B^{*}(y^{J_k} -y ))  \| \nonumber \\
& \leq \frac{1}{1+\tau} (\|\overline{x}^{J_k} - x\| + \tau \|B\| \|y^{J_k} -y\|)  \leq \frac{1}{k^2}.
\end{align}
Therefore, $\sum_{n=0}^{\infty}\| e^k \| < +\infty$. By Theorem \ref{main-convergence-theorem}, we can conclude that the iterative sequence $\{x^k\}$ converges to a solution of the optimization problem (\ref{three-sum}). This completes the proof.

\end{proof}

\subsection{Connections to other existing iterative algorithms}

In this subsection, we present the connections of the proposed iterative algorithm to some existing iterative algorithms.

In Algorithm \ref{dual-forward-backward}, let $j_k = k$ and the number of inner iteration equals to one, then it is reduced to the PDFP \cite{chenpj2016},
\begin{equation}\label{alg-PDFP}
 \left\{
\begin{aligned}
v^{k+1} & =  prox_{\gamma g}(x^k - \gamma \nabla f(x^k) - \gamma B^{*}y^{k}), \\
y^{k+1} & =  prox_{\frac{\lambda}{\gamma}h^{*}}(y^k + \frac{\lambda}{\gamma}Bv^{k+1}), \\
x^{k+1} & =  prox_{\gamma g}(x^k - \gamma \nabla f(x^k) - \gamma B^{*}y^{k+1}),
\end{aligned}
\right.
\end{equation}
With the help of Moreau equality (Lemma \ref{lem-moreau-equality}), the updated sequence $y^{k+1}$ is equal to
\begin{equation}
\begin{aligned}
y^{k+1} & =  prox_{\frac{\lambda}{\gamma}h^{*}}( \frac{\lambda}{\gamma}( \frac{\gamma}{\lambda} y^k + Bv^{k+1})), \\
& \quad = \frac{\lambda}{\gamma} (I - prox_{\frac{\gamma}{\lambda}h})(\frac{\gamma}{\lambda} y^k + Bv^{k+1}).
\end{aligned}
\end{equation}
Let $\overline{y}^{k} = \frac{\gamma}{\lambda}y^{k}$, then the PDFP iteration scheme (\ref{alg-PDFP}) can be rewritten as
\begin{equation}\label{alg-PDFP2}
 \left\{
\begin{aligned}
v^{k+1} & =  prox_{\gamma g}(x^k - \gamma \nabla f(x^k) - \lambda B^{*}\overline{y}^{k}), \\
\overline{y}^{k+1} & =  (I -prox_{\frac{\gamma}{\lambda}h})(y^k + Bv^{k+1}), \\
x^{k+1} & =  prox_{\gamma g}(x^k - \gamma \nabla f(x^k) - \lambda B^{*}\overline{y}^{k+1}),
\end{aligned}
\right.
\end{equation}
Chen et al. \cite{chenpj2016} proved the convergence of (\ref{alg-PDFP}) under the conditions that $0< \lambda < 1/\lambda_{max}(BB^{*})$ and $0< \gamma < 2/L$. Our proposed Algorithm \ref{dual-forward-backward} provides a more wide selection of the iterative parameter $\lambda$ than the PDFP \cite{chenpj2016}. It's observed that the PDFP \cite{chenpj2016} coincides with the dual forward-backward algorithm proposed in Combettes et al. \cite{combettesSVA2010} when $f(x)=\frac{1}{2}\|x-u\|_{2}^{2}$ and the iterative parameter $\gamma =1$.

In the next, we show the connection between Algorithm \ref{primal-dual-forward-backward} and the Condat-Vu algorithm.
In Algorithm \ref{primal-dual-forward-backward}, let $\overline{x}^{0}=x^{0}$, $j_k =k$ and fix the number of inner iteration with one, then the iteration scheme of Algorithm \ref{primal-dual-forward-backward} reduces to
\begin{equation}\label{alg-condat}
 \left\{
\begin{aligned}
x^{k+1} & =  prox_{\frac{\tau \gamma}{1+\tau} g}(\frac{x^k - \tau B^{*}y^k + \tau (x^k - \gamma \nabla f(x^k))}{1+\tau}), \\
y^{k+1} & =  \gamma prox_{\frac{\sigma}{\gamma}h^{*}}( \frac{1}{\gamma}y^k + \frac{\sigma}{\gamma}B(2x^{k+1} -x^k ) ).
\end{aligned}
\right.
\end{equation}
Let $\overline{y}^k = \frac{1}{\gamma}y^k$ and after simple calculation, then the iteration scheme (\ref{alg-condat}) can be represented as
\begin{equation}\label{alg-condat2}
 \left\{
\begin{aligned}
x^{k+1} & =  prox_{\frac{\tau \gamma}{1+\tau} g}(x^k - \frac{\tau\gamma}{1+\tau} B^{*}\overline{y}^k - \frac{\tau\gamma}{1+\tau} \nabla f(x^k)), \\
\overline{y}^{k+1} & =   prox_{\frac{\sigma}{\gamma}h^{*}}( \overline{y}^k + \frac{\sigma}{\gamma}B(2x^{k+1} -x^k ) ).
\end{aligned}
\right.
\end{equation}
Let $\sigma' = \frac{\sigma}{\gamma}$ and $\tau' = \frac{\tau \gamma}{1+\tau}$, then the above iteration scheme (\ref{alg-condat2}) recovers the primal-dual splitting algorithm proposed in \cite{condat2013},
\begin{equation}\label{alg-condat3}
 \left\{
\begin{aligned}
x^{k+1} & =  prox_{\tau' g}(x^k - \tau' B^{*}\overline{y}^k - \tau' \nabla f(x^k)), \\
\overline{y}^{k+1} & =   prox_{\sigma' h^{*}}( \overline{y}^k + \sigma' B(2x^{k+1} -x^k ) ).
\end{aligned}
\right.
\end{equation}
Condat \cite{condat2013} proved the iteration scheme (\ref{alg-condat3}) converges to a solution of the optimization problem (\ref{three-sum}) under the condition $\frac{1}{\tau'} - \sigma' \|B\|^2 > L/2$. This condition is satisfied with the choice of iterative parameters in Algorithm \ref{primal-dual-forward-backward}. In fact, since $\sigma = \gamma \sigma'$ and $\tau = \frac{\tau'}{\gamma -\tau'}$, it follows from the conditions of $\sigma \tau \|B\|^2 <1$ and $0<\gamma <2/L$, we have
\begin{align}
& \gamma \sigma' \frac{\tau'}{\gamma - \tau'}\|B\|^2 <1, \nonumber \\
& \Leftrightarrow \frac{1}{\tau'} - \sigma' \|B\|^2 > 1/\gamma > L/2.
\end{align}
In the above inequality, all the iterative parameters are mixed. While in our proposed Algorithm \ref{primal-dual-forward-backward}, we have more freedom to choose the iterative parameters than the Condat-Vu algorithm. We note that Yan \cite{yanm2016} presented a formulation of Condat-Vu algorithm as follows,
\begin{equation}\label{alg-condat4}
 \left\{
\begin{aligned}
x^{k+1} & =  prox_{\gamma g}(x^k - \gamma B^{*}\overline{y}^k - \gamma \nabla f(x^k)), \\
\overline{y}^{k+1} & =   prox_{\frac{\sigma}{\gamma}h^{*}}( \overline{y}^k + \frac{\sigma}{\gamma}B(2x^{k+1} -x^k ) ).
\end{aligned}
\right.
\end{equation}
Yan \cite{yanm2016} pointed out the iteration scheme (\ref{alg-condat4}) may diverges with the condition of $0<\gamma <2/L$ and $0<\sigma<1/\|B\|^2$. Under the same condition of the iterative parameters $\gamma$ and $\sigma$, we give another formulation of the Condat-Vu algorithm below, which is converged. In fact, let $\tau = 1$ in the iteration scheme (\ref{alg-condat2}), it reduces to
\begin{equation}\label{alg-condat5}
 \left\{
\begin{aligned}
x^{k+1} & =  prox_{\frac{ \gamma}{2} g}(x^k - \frac{\gamma}{2} B^{*}\overline{y}^k - \frac{\gamma}{2} \nabla f(x^k)), \\
\overline{y}^{k+1} & =   prox_{\frac{\sigma}{\gamma}h^{*}}( \overline{y}^k + \frac{\sigma}{\gamma}B(2x^{k+1} -x^k ) ).
\end{aligned}
\right.
\end{equation}
The difference between (\ref{alg-condat4}) and (\ref{alg-condat5}) is the iterative parameter $\gamma$ involved in the calculation of $x^{k+1}$. Under the choice of $\gamma$ belongs to $(0,2/L)$, the iteration scheme (\ref{alg-condat5}) maintains converge while the iteration scheme (\ref{alg-condat4}) may diverges as stated by Yan \cite{yanm2016}.

\section{A three operator splitting method to solve (\ref{three-sum})}
\label{three-operator-dual-primal-dual}

Recently, Yan \cite{yanm2016} proposed a primal-dual three operator (PD3O) splitting algorithm to solve the optimization problem (\ref{three-sum}).  In particular, when $B=I$, the  PD3O \cite{yanm2016} coincides with the three operator splitting algorithm proposed by Davis and Yin \cite{davis2015}. The three operator splitting method is a generalization of many well-known operators splitting methods, such as Forward-Backward splitting method \cite{passty1979JMAA,Chenhg1997}, Douglas-Rachford splitting method \cite{lionsandmercier1979,Eckstein1992}, Peaceman-Rachford splitting method \cite{peacemanandrachford1955,hebs2014SJO} and Forward-Douglas-Rachford splitting method \cite{briceno2015Optim}.
In this section, we will apply the three operator splitting algorithm to solve the optimization problem (\ref{three-sum}). The iteration scheme of the  three operator splitting algorithm with errors when applying to the optimization problem (\ref{three-sum}) takes the form of, for any $z^0 \in X$,
\begin{equation}\label{eq4-1}
\begin{aligned}
x^{k} & = prox_{\gamma g}(z^k), \\
s^k & = prox_{\gamma (h \circ B)}(2x^k - z^k - \gamma \nabla f(x^k)) +e_k, \\
z^{k+1} & = z^k + s^k -x^k,
\end{aligned}
\end{equation}
where $\gamma \in (0,2/L)$, $e_k$ is an error vector. We take the following convergence results of the three operator splitting algorithm (\ref{eq4-1}) from \cite{davis2015}.

\begin{thm}(\cite{davis2015})\label{main-convergence-theorem2}
Let $\gamma \in (0, 2/L)$, $L$ is the Lipschitz constant of $\nabla f$. For any $x^{0}\in X$,  the iterative sequence $\{x^{k}\}, \{s^k\}$ and $\{z^k\}$ are defined by (\ref{eq4-1}). We assume that
 $\sum_{n=0}^{\infty}\|e_k\| < +\infty$. Then the iterative sequence $\{x^k\}$ and $\{s^k\}$ converge weakly to a solution of the optimization problem (\ref{three-sum}).
\end{thm}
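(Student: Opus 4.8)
This statement is essentially the convergence theorem of \cite{davis2015} (together with its discussion of iterations with summable errors), so one route is simply to invoke that reference; for completeness I describe the structure of the argument in the present notation. The plan is to recast the error-free version of (\ref{eq4-1}) as a single fixed-point iteration $z^{k+1}=Tz^{k}$ with
\begin{equation*}
Tz \;=\; z - prox_{\gamma g}(z) + prox_{\gamma (h\circ B)}\!\big(2\,prox_{\gamma g}(z) - z - \gamma\nabla f(prox_{\gamma g}(z))\big),
\end{equation*}
and to observe that the actual iteration (\ref{eq4-1}) is exactly $z^{k+1}=Tz^{k}+e_{k}$: the error $e_{k}$ is added to $s^{k}$ and then enters the update $z^{k+1}=z^{k}+s^{k}-x^{k}$ unchanged, so in particular $s^{k}-x^{k}=z^{k+1}-z^{k}$.

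First I would prove that $T$ is $\alpha$-averaged for some $\alpha\in(0,1)$ whenever $\gamma\in(0,2/L)$; this is the technical heart of the matter. One uses that $prox_{\gamma g}$ and $prox_{\gamma(h\circ B)}$ are firmly nonexpansive and that, by the Baillon--Haddad theorem, $\nabla f$ is $(1/L)$-cocoercive, so the forward step $I-\gamma\nabla f$ (evaluated at $prox_{\gamma g}(z)$) is $\tfrac{\gamma L}{2}$-averaged; combining these exactly as in \cite{davis2015} yields the averaged-operator inequality for $T$ with, e.g., $\alpha=\tfrac{2}{4-\gamma L}\in(1/2,1)$. In particular $T$ is nonexpansive and $\mathrm{Id}-T$ is demiclosed at $0$.

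Next I would identify $\mathrm{Fix}(T)$ with the solution set of (\ref{three-sum}). If $z^{\star}\in\mathrm{Fix}(T)$ and $x^{\star}=prox_{\gamma g}(z^{\star})$, then writing out the optimality conditions of the two proximal steps and applying the sum rule for subdifferentials recalled in Section \ref{basic_notation} (valid under the standing assumption $0\in\mathrm{int}(B(\mathrm{dom}\,g)-\mathrm{dom}\,h)$) gives $0\in\nabla f(x^{\star})+\partial g(x^{\star})+B^{*}\partial h(Bx^{\star})$, i.e.\ $x^{\star}$ solves (\ref{three-sum}); conversely every solution yields a fixed point. Since (\ref{three-sum}) is assumed to possess minimizers, $\mathrm{Fix}(T)\neq\emptyset$.

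Finally I would invoke the convergence theorem for inexact Krasnosel'ski\u\i--Mann iterations of averaged operators --- the same type of result underlying Theorem \ref{main-convergence-theorem} --- which states that if $z^{k+1}=Tz^{k}+e_{k}$ with $T$ averaged, $\mathrm{Fix}(T)\neq\emptyset$ and $\sum_{k}\|e_{k}\|<\infty$, then $\{z^{k}\}$ is Fej\'er monotone with respect to $\mathrm{Fix}(T)$, $\|z^{k+1}-z^{k}\|\to 0$, and $z^{k}\rightharpoonup z^{\star}$ for some $z^{\star}\in\mathrm{Fix}(T)$. Then $s^{k}-x^{k}=z^{k+1}-z^{k}\to 0$ strongly; using demiclosedness of the operators involved (equivalently, passing to weak cluster points of the bounded sequence $\{x^{k}\}$ and exploiting $\|z^{k+1}-z^{k}\|\to 0$, $\|e_{k}\|\to 0$) one shows every weak cluster point of $\{x^{k}\}$ is a solution, which together with the Fej\'er monotonicity of $\{z^{k}\}$ upgrades this to $x^{k}\rightharpoonup x^{\star}$ and hence $s^{k}\rightharpoonup x^{\star}$, with $x^{\star}=prox_{\gamma g}(z^{\star})$ a solution of (\ref{three-sum}). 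The main obstacle is the averagedness estimate for $T$ --- pinning down an admissible constant and correctly accounting for the fact that $\nabla f$ is evaluated only at $prox_{\gamma g}(z^{k})$ --- while the error bookkeeping in the inexact Krasnosel'ski\u\i--Mann step is a secondary, essentially routine point.
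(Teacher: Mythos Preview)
The paper does not supply its own proof of this theorem; it is stated with the citation \cite{davis2015} and used as a black box, exactly as you anticipate in your first sentence. Your sketch---rewriting (\ref{eq4-1}) as $z^{k+1}=Tz^{k}+e_{k}$ for the Davis--Yin operator $T$, establishing that $T$ is averaged on $(0,2/L)$, identifying $\mathrm{Fix}(T)$ with the solution set via the subdifferential sum rule, and then invoking the inexact Krasnosel'ski\u\i--Mann theorem---is precisely the argument of \cite{davis2015}, so your proposal is correct and matches what the paper relies on.
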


The key implementation of the above iteration scheme (\ref{eq4-1}) is to compute proximity operator of function $\gamma h(Bx)$. Although it still doesn't have a closed-form solution, as we have done in the last section, we can still obtain a precision solution from two approaches: one is dual and the other is primal-dual. In the following, we will present the details. In particular, we will show that the PD3O \cite{yanm2016} is actually a special case of our proposed iterative algorithm.

\subsection{Dual approach}

In this part, we present how to get the updated iterative sequences $s^k$ in (\ref{eq4-1}) via the dual approach. For convenience, let $u^k = 2x^k - z^k - \gamma \nabla f(x^k)$, we have
\begin{equation}\label{eq4-2}
prox_{\gamma (h\circ B)}(u^k ) = \arg\min_{v}\ \{ \frac{1}{2}\| v- u^k \|^2 + \gamma h(Bv) \}.
\end{equation}
Let $g:=0$ and $h:= \gamma h$ in Lemma \ref{key-lemma}, it follows from the derivation of (\ref{eq3-2}) and (\ref{eq3-2-1}), we obtain the dual problem of the minimization  problem (\ref{eq4-2}) is
\begin{equation}\label{eq4-3}
\max_{y\in Y}\ -\frac{1}{2}\| \gamma B^{*}y - u^k \|^2 - \gamma h^{*}(y),
\end{equation}
and the primal optimal solution of (\ref{eq4-2}) $v^{*} = u^k - \gamma B^{*}y^{*}$, where $y^{*}$  is the dual optimal solution of (\ref{eq4-3}). The dual optimization problem (\ref{eq4-3}) can be rewritten as follows,
\begin{equation}\label{eq4-4}
\min_{y\in Y}\ \frac{1}{2}\|  B^{*}y - \frac{1}{\gamma}u^k \|^2 + \frac{1}{\gamma} h^{*}(y),
\end{equation}
Both of the optimization problem (\ref{eq4-3}) and (\ref{eq4-4}) have the same optimal solution $y^{*}$. The corresponding minimization problem (\ref{eq4-4}) can be solved by the forward-backward splitting algorithm and the iteration scheme is presented below. For any $y^{0}\in Y$, choose $0<\lambda < 2/\lambda_{max}(BB^{*})$,
\begin{equation}\label{eq4-5}
y^{j_k +1} = prox_{\frac{\lambda}{\gamma}h^{*}} (y^{j_k} - \lambda B(B^{*}y^{j_k} - \frac{1}{\gamma}u^k)),\ j_k = 0, 1, 2, \cdots.
\end{equation}
 Let $s^k = u^k - \gamma B^{*}y^{J_k}$, where $y^{J_k}$ is the limit point of the iteration scheme (\ref{eq4-5}). For the updated sequence $z^{k+1}$ in (\ref{eq4-1}), we have
\begin{align}
z^{k+1} & = z^k + s^k - x^k, \\
& = z^k + u^k -\gamma B^{*}y^{J_k} - x^k, \\
& = z^k +  2x^k - z^k - \gamma \nabla f(x^k) - \gamma B^{*}y^{J_k} - x^k,\\
& = x^k - \gamma \nabla f(x^k) - \gamma B^{*} y^{J_k}.
\end{align}
In conclusion, we obtain the following iterative algorithm to solve the optimization problem (\ref{three-sum}), which is based on the three operator splitting  scheme (\ref{eq4-1}).

\begin{algorithm}[H]
\caption{A dual three operator splitting  algorithm for solving the optimization problem (\ref{three-sum})}
\begin{algorithmic}\label{dual-three-operator-splitting}
\STATE \textbf{Initialize:}  Given arbitrary $z^{0}\in X$ and $y^{0}\in Y$. Choose $\gamma \in (0,2/L)$ and $\lambda \in (0 , 2/\lambda_{max}(BB^{*}))$.
\STATE 1. (Outer iteration step)\ For $k=0, 1, 2, \cdots$
\STATE \quad $x^{k} = prox_{\gamma g}(z^k)$;
\STATE 2. (Inner iteration step)\ For $j_k = 0, 1, 2, \cdots$
\STATE \quad $y^{j_k +1} = prox_{\frac{\lambda}{\gamma}h^{*}} ( (I-\lambda BB^{*})y^{j_k}  + \frac{\lambda}{\gamma}B (2x^k - z^k - \gamma \nabla f(x^k)) )$;
\STATE  End inner iteration step when stopping criteria reached. Output: $y^{J_k}$.
\STATE 3. Update $s^k = 2x^k - z^k - \gamma \nabla f(x^k) -\gamma B^{*}y^{J_k}$;
\STATE 4. Update $z^{k+1} = z^k + s^k - x^k$;
\STATE 5. End the outer iteration step when some stopping criteria reached.
\end{algorithmic}
\end{algorithm}

Similar to Theorem \ref{convergence-dual-forward-backward}, we can prove the following convergence theorem of Algorithm \ref{dual-three-operator-splitting} in finite-dimensional Hilbert spaces.
\begin{thm}
Let $\gamma \in (0,2/L)$ and $\lambda \in (0 , 2/\lambda_{max}(BB^{*}))$. For any $x^{0}\in X$ and $y^{0}\in Y$, the iterative sequences $\{x^k\}$ and $\{s^k\}$ are generated by Algorithm \ref{dual-three-operator-splitting}. Then the iterative sequences $\{x^k\}$ and $\{s^k\}$  converge to a solution of the optimization problem (\ref{three-sum}).

\end{thm}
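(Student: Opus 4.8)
The plan is to follow the same template as in the proof of Theorem \ref{convergence-dual-forward-backward}: at each outer iteration $k$ the inner loop produces a vector $s^k$ that approximates $prox_{\gamma(h\circ B)}(u^k)$, with $u^k = 2x^k - z^k - \gamma\nabla f(x^k)$, up to an error that is summable in $k$; the conclusion then follows from the convergence theorem for the inexact three operator splitting scheme, Theorem \ref{main-convergence-theorem2}.

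First I would fix the outer index $k$. By the derivation leading to (\ref{eq4-3})--(\ref{eq4-4}) — i.e., Lemma \ref{key-lemma} applied with $g:=0$, $h:=\gamma h$, $r=0$ and $u=u^k$ — the minimization problem (\ref{eq4-4}), namely $\min_{y\in Y}\ \tfrac12\|B^{*}y-\tfrac1\gamma u^k\|^2+\tfrac1\gamma h^{*}(y)$, has a nonempty solution set, and for \emph{any} dual minimizer $y$ one has $prox_{\gamma(h\circ B)}(u^k)=u^k-\gamma B^{*}y$; in particular $\gamma B^{*}y$ equals the single-valued quantity $u^k-prox_{\gamma(h\circ B)}(u^k)$ for every dual optimum, so non-uniqueness of $y$ is harmless. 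The smooth part $F(y)=\tfrac12\|B^{*}y-\tfrac1\gamma u^k\|^2$ has gradient $\nabla F(y)=B(B^{*}y-\tfrac1\gamma u^k)$, which is Lipschitz continuous with constant $\lambda_{max}(BB^{*})$, so with $\lambda\in(0,2/\lambda_{max}(BB^{*}))$ the inner recursion (\ref{eq4-5}) — equivalently the update in Algorithm \ref{dual-three-operator-splitting}, since $(I-\lambda BB^{*})y^{j_k}+\tfrac\lambda\gamma B u^k=y^{j_k}-\lambda\nabla F(y^{j_k})$ — is exactly the classical forward--backward splitting algorithm for (\ref{eq4-4}), hence the inner sequence $\{y^{j_k}\}$ converges to some solution $\bar y$ of (\ref{eq4-4}).

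Next, for the tolerance $1/(\gamma\|B\|k^2)>0$ I would choose the stopping index $J_k$ large enough that $\|y^{J_k}-\bar y\|\le 1/(\gamma\|B\|k^2)$. Since $s^k=u^k-\gamma B^{*}y^{J_k}$ and $prox_{\gamma(h\circ B)}(u^k)=u^k-\gamma B^{*}\bar y$, the error vector appearing in (\ref{eq4-1}) satisfies
\[
\|e_k\|=\bigl\|s^k-prox_{\gamma(h\circ B)}(u^k)\bigr\|=\gamma\|B^{*}(y^{J_k}-\bar y)\|\le\gamma\|B\|\,\|y^{J_k}-\bar y\|\le\frac{1}{k^2},
\]
so $\sum_{k=0}^{\infty}\|e_k\|<+\infty$. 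With $\gamma\in(0,2/L)$, Theorem \ref{main-convergence-theorem2} applies and gives that $\{x^k\}$ and $\{s^k\}$ converge to a solution of (\ref{three-sum}), which is the claim.

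The only point needing care — and it is the same mild one as in Theorem \ref{convergence-dual-forward-backward} rather than a genuine obstacle — is that the inner subproblem (\ref{eq4-4}) changes with $k$ through $u^k$, so one must invoke forward--backward convergence \emph{for each fixed} $k$ to justify that $J_k$ with the prescribed accuracy exists, and one must recover $prox_{\gamma(h\circ B)}(u^k)$ from a (possibly non-unique) dual optimum; both are addressed above by noting that $\gamma B^{*}y$ is the same vector for every dual minimizer. The remaining manipulations are the verbatim bookkeeping of the previous theorem.
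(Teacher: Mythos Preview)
Your proposal is correct and follows essentially the same argument as the paper: invoke Lemma \ref{key-lemma} (with $g:=0$) to write $prox_{\gamma(h\circ B)}(u^k)=u^k-\gamma B^{*}y$ for a dual optimum $y$, use forward--backward convergence on (\ref{eq4-4}) to pick $J_k$ with $\|y^{J_k}-y\|\le 1/(\gamma\|B\|k^2)$, bound $\|e_k\|\le 1/k^2$, and conclude via Theorem \ref{main-convergence-theorem2}. Your added remarks on the non-uniqueness of the dual minimizer and the $k$-dependence of the inner subproblem are clarifications the paper leaves implicit, but the route is the same.
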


\begin{proof}
Let $y$ is an optimal solution of (\ref{eq4-3}), by Lemma \ref{key-lemma}, $v= u^k - \gamma B^{*}y$ is the optimal solution of (\ref{eq4-2}), i.e., $prox_{\gamma (h\circ B)}(u^k) = u^k - \gamma B^{*}y$. Then it follows from the classical convergence of the forward-backward splitting algorithm, the iterative sequence $y^k \rightarrow y$ as $k\rightarrow \infty$. For a given constant $\frac{1}{\gamma \|B\|k^2} >0$, there exists an integer $j_k >0$, when $J_k \geq j_k$, we have $\|y^{J_k}-y\|\leq \frac{1}{\gamma \|B\| k^2}$. Noticing that $s^k = u^k - \gamma B^{*}y^{J_k}$, we have
\begin{align}
\| e_k \| & = \| s^k - prox_{\gamma (h\circ B)}(u^k)  \| \nonumber \\
& = \| u^k - \gamma B^{*}y^{J_k} - (u^k - \gamma B^{*} y )  \| \nonumber \\
& \leq \gamma \|B\| \| y^{J_k} - y \| \leq \frac{1}{k^2}.
\end{align}
Therefore, $\sum_{k=0}^{\infty}\|e_k\| < +\infty$. By Theorem \ref{main-convergence-theorem2}, we can conclude that the iterative sequences $\{x^k\}$ and $\{s^k\}$  converge to a solution of the optimization problem (\ref{three-sum}).

\end{proof}

\subsection{Primal-dual approach}
In this part, we employ the primal-dual proximity algorithm to solve the optimization problem (\ref{eq4-2}). Given arbitrary $v^{0}\in X$ and $y^{0}\in Y$, for $j_k= 0,1, 2, \cdots$, the iteration scheme is defined by
\begin{align}
v^{j_k +1} & = prox_{\tau (\frac{1}{2}\| \cdot - u^k \|^2)} (v^{j_k} - \tau B^{*}y^{j_k}),\label{eq4-6-1} \\
y^{j_k +1} & = prox_{\sigma (\gamma h)^{*}} (y^{j_k} + \sigma B(2v^{j_k +1} - v^{j_k})), \label{eq4-6-2}
\end{align}
where $\tau >0$ and $\sigma >0$ satisfy $\tau \sigma \|B\|^2 <1$. By the definition of proximity operator and Lemma \ref{lem-proximity-conjugate}, the iteration scheme (\ref{eq4-6-1}) and (\ref{eq4-6-2}) can be  simplified as
\begin{align}
v^{j_k +1} & = \frac{v^{j_k} - \tau B^{*}y^{j_k} +\tau u^{k}}{1+\tau},\\
y^{j_k +1} & = \gamma prox_{\frac{\sigma}{\gamma}h^{*}}(\frac{1}{\gamma}y^{j_k} +\frac{\sigma}{\gamma}B(2v^{j_k +1} - v^{j_k}) ).
\end{align}
In conclusion, we obtain the following iterative algorithm to solve the optimization problem (\ref{three-sum}).

\begin{algorithm}[H]
\caption{A primal-dual three operator splitting  algorithm for solving the optimization problem (\ref{three-sum})}
\begin{algorithmic}\label{primal-dual-three-operator-splitting}
\STATE \textbf{Initialize:}  Given arbitrary $z^{0}, v^{0}\in X$ and $y^{0}\in Y$. Choose $\gamma \in (0,2/L)$. Let $\sigma>0$ and $\tau >0$ satisfy the
condition that $\tau \sigma < \frac{1}{\|B\|^2}$.
\STATE 1. (Outer iteration step)\ For $k=0, 1, 2, \cdots$
\STATE \quad 1.a. $x^{k} = prox_{\gamma g}(z^k)$;
\STATE \quad 1.b. $u^{k} = 2x^k - z^k - \gamma \nabla f(x^k)$;
\STATE 2. (Inner iteration step)\ For $j_k = 0, 1, 2, \cdots$
\STATE \quad 2.a. $v^{j_k +1}  = \frac{v^{j_k} - \tau B^{*}y^{j_k} +\tau u^{k}}{1+\tau}$;
\STATE \quad 2.b. $y^{j_k +1}  = \gamma prox_{\frac{\sigma}{\gamma}h^{*}}(\frac{1}{\gamma}y^{j_k} +\frac{\sigma}{\gamma}B(2v^{j_k +1} - v^{j_k}) )$;
\STATE \ End inner iteration when the stopping criteria reached and output $v^{J_k}$
\STATE 3. \quad 1.c. Update $z^{k+1} = z^k + v^{J_k} - x^k$.
\STATE 4. End the outer iteration step when some stopping criteria reached
\end{algorithmic}
\end{algorithm}

We can also prove the convergence of Algorithm \ref{primal-dual-three-operator-splitting} in finite-dimensional Hilbert spaces. The proof method is similar to Theorem \ref{convergence-primal-dual-forward-backward}.

\begin{thm}
Let $\gamma \in (0,2/L)$. Let $\sigma>0$ and $\tau >0$ satisfy the
condition that $\tau \sigma < \frac{1}{\|B\|^2}$. For any $z^{0}, v^{0}\in X$ and $y^{0}\in Y$, the iterative sequences $\{x^k\}$, $\{z^k\}$, $\{v^k\}$ and $\{y^k\}$ are generated by Algorithm \ref{primal-dual-three-operator-splitting}. Then the iterative sequences $\{x^k\}$ and $\{v^k\}$ converge to a solution of the optimization problem (\ref{three-sum}).

\end{thm}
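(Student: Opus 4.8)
The plan is to mirror the proof of Theorem~\ref{convergence-primal-dual-forward-backward}: I will show that, at each outer step $k$, the inner loop of Algorithm~\ref{primal-dual-three-operator-splitting} produces an arbitrarily accurate approximation $v^{J_k}$ of $prox_{\gamma (h\circ B)}(u^k)$, so that Algorithm~\ref{primal-dual-three-operator-splitting} is an instance of the three operator splitting scheme~(\ref{eq4-1}) with a \emph{summable} error sequence, and then conclude by invoking the convergence theorem for the three operator splitting method with errors, Theorem~\ref{main-convergence-theorem2}. Concretely, I would set $u^k = 2x^k - z^k - \gamma\nabla f(x^k)$ (Step 1.b), $s^k := v^{J_k}$, and $e_k := v^{J_k} - prox_{\gamma (h\circ B)}(u^k)$. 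With this identification Steps 1.a and 1.c of the algorithm are exactly the updates $x^k = prox_{\gamma g}(z^k)$ and $z^{k+1} = z^k + s^k - x^k$ appearing in~(\ref{eq4-1}), so the whole argument reduces to bounding $\|e_k\|$.

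Next I would observe that, after undoing the simplification carried out via Lemma~\ref{lem-proximity-conjugate}, the inner iteration (Steps 2.a--2.b) is precisely the primal-dual proximity algorithm of \cite{chambolleandpock2011} applied to the subproblem~(\ref{eq4-2}), that is, to $\min_v \tfrac12\|v-u^k\|^2 + \gamma h(Bv)$, with $f_1(v) = \tfrac12\|v-u^k\|^2$, $f_2 = \gamma h$, and step sizes $\tau,\sigma$ obeying $\tau\sigma\|B\|^2 < 1$. Since $\mathrm{dom}\,f_1 = X$, the standing assumption $0\in\mathrm{int}(B(\mathrm{dom}\,g)-\mathrm{dom}\,h)$ forces $0\in\mathrm{int}(B(\mathrm{dom}\,f_1)-\mathrm{dom}\,f_2)$, so Lemma~\ref{key-lemmalemma} applies and produces a vector $y\in Y$ such that $(v,y)$, with $v = prox_{\gamma (h\circ B)}(u^k)$, is a primal-dual (saddle-point) pair for~(\ref{eq4-2}); moreover $v$ is the \emph{unique} minimizer because the objective is strongly convex in $v$.

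Then I would invoke Theorem~1 of \cite{chambolleandpock2011}: the inner sequences $\{v^{j_k}\}$ and $\{y^{j_k}\}$ converge, as $j_k\to\infty$, to a primal-dual pair, and by uniqueness of the primal part the primal limit is $v = prox_{\gamma (h\circ B)}(u^k)$. Hence, at the $k$-th outer iteration and for the prescribed tolerance $1/k^2 > 0$, there is an integer $j_k$ such that every $J_k \ge j_k$ yields $\|v^{J_k} - v\| \le 1/k^2$, i.e. $\|e_k\| = \|v^{J_k} - prox_{\gamma (h\circ B)}(u^k)\| = \|v^{J_k} - v\| \le 1/k^2$, so that $\sum_{k=0}^{\infty}\|e_k\| \le \sum_{k\ge1} 1/k^2 < +\infty$. (If one prefers to keep the estimate in terms of both inner sequences, nonexpansiveness of $prox_{\tau f_1}$ gives $\|e_k\| \le \tfrac{1}{1+\tau}\bigl(\|v^{J_k}-v\| + \tau\|B\|\,\|y^{J_k}-y\|\bigr)$, which is handled exactly as in Theorem~\ref{convergence-primal-dual-forward-backward}.) With this summable error, Theorem~\ref{main-convergence-theorem2} applied to~(\ref{eq4-1}) gives that $\{x^k\}$ and $\{s^k\} = \{v^k\}$ (the sequence of inner outputs) converge to a solution of~(\ref{three-sum}), which is the assertion; in finite dimensions weak convergence is strong convergence.

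The main obstacle, just as in Theorem~\ref{convergence-primal-dual-forward-backward}, is that the inner subproblem depends on the outer iterate through $u^k$, so I must be careful that: (i) the Chambolle--Pock convergence is invoked for each \emph{fixed} $u^k$ (each inner problem being a fixed convex program, this is legitimate); (ii) the inner iteration count $J_k$ is chosen only \emph{after} the outer iterate $x^k,z^k$ — hence $u^k$ — is available, which makes the resulting stopping rule existential rather than explicit, exactly as in Theorems~\ref{convergence-dual-forward-backward} and~\ref{convergence-primal-dual-forward-backward}; and (iii) Theorem~1 of \cite{chambolleandpock2011} requires a saddle point for the inner problem to exist, which is precisely what Lemma~\ref{key-lemmalemma} guarantees under the standing constraint qualification.
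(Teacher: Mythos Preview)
Your proposal is correct and follows essentially the same approach as the paper: identify the inner loop as the Chambolle--Pock algorithm applied to~(\ref{eq4-2}) with $f_1=\tfrac12\|\cdot-u^k\|^2$ and $f_2=\gamma h$, use Lemma~\ref{key-lemmalemma} together with Theorem~1 of \cite{chambolleandpock2011} to get convergence of the inner iterates to a saddle point, choose $J_k$ large enough to make $\|e_k\|\le 1/k^2$, and conclude via Theorem~\ref{main-convergence-theorem2}. The paper uses precisely the two-sequence bound $\|e_k\|\le \tfrac{1}{1+\tau}(\|v^{J_k}-v\|+\tau\|B\|\,\|y^{J_k}-y\|)$ that you mention as an alternative; your direct estimate $\|e_k\|=\|v^{J_k}-v\|$ via uniqueness of the strongly convex primal minimizer is a slight streamlining of the same argument.
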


\begin{proof}
In Lemma \ref{key-lemmalemma}, define $f_1 := \frac{1}{2}\|\cdot - u^k \|^2$ and $f_2 := \gamma h$. Let $v$ is the optimal solution of (\ref{eq4-2}), then by Lemma \ref{key-lemmalemma}, for any $\sigma >0$ and $\tau >0$, there exists a vector $y$ such that
\begin{equation}
\begin{aligned}
v & = \frac{v-\tau B^{*}y + \tau u^k}{1+\tau}, \\
y & = prox_{\sigma (\gamma h)^{*}} (y+ \sigma Bx).
\end{aligned}
\end{equation}
It follows from the convergence of the primal-dual proximity algorithm (Theorem 1 of \cite{chambolleandpock2011}), we have $\{v^k\}$ and $\{y^k\}$ converge to $v$ and $y$, respectively. Then for a given constant $\frac{1+\tau}{k^2} >0$, there exits an integer $j_k$, when $J_k \geq j_k$, we have $\|  v^{J_k} - v \| + \tau \|B\| \|y^{J_k}-y\| \leq \frac{1+\tau}{k^2} $. Therefore, we get
\begin{align}
\|e_k\| & = \|s^k - prox_{\gamma (h\circ B)}(u^k) \| \nonumber \\
& = \|  \frac{v^{J_k} - \tau B^{*}y^{J_k} +\tau u^{k}}{1+\tau} - \frac{v-\tau B^{*}y + \tau u^k}{1+\tau} \| \nonumber \\
& \leq \frac{1}{1+\tau} ( \|v^{J_k}-v\| + \tau \|B\| \|y^{J_k}-y\| ) \leq \frac{1}{k^2}.
\end{align}
Then $\sum_{n=0}^{\infty}\|e_k\| < +\infty$. By Theorem \ref{main-convergence-theorem2}, we can conclude that the iterative sequences $\{x^k\}$ and $\{v^k\}$  converge to a solution of the optimization problem (\ref{three-sum}). This completes the proof.

\end{proof}

\subsection{Connection to existing iterative algorithms}
In this part, we show that the proposed Algorithm \ref{dual-three-operator-splitting} recovers the PD3O \cite{yanm2016} proposed by Yan \cite{yanm2016}. In fact, let $j_k =k$ and set the number of inner iteration equals to one in Algorithm \ref{dual-three-operator-splitting}, then we have the iteration scheme of
\begin{equation}\label{alg-PD3O}
 \left\{
\begin{aligned}
x^{k} & = prox_{\gamma g}(z^k),\\
y^{k +1} & = prox_{\frac{\lambda}{\gamma}h^{*}} ( (I-\lambda BB^{*})y^{k}  + \frac{\lambda}{\gamma}B (2x^k - z^k - \gamma \nabla f(x^k)) ), \\
z^{k+1} & = x^k - \gamma \nabla f(x^k) - \gamma B^{*}y^{k +1}.
\end{aligned}
\right.
\end{equation}
Yan \cite{yanm2016} proved the convergence of the PD3O (\ref{alg-PD3O}) under the condition $0<\gamma <2/L$ and $0< \lambda< 1/\lambda_{max}(BB^{*})$. Algorithm \ref{dual-three-operator-splitting} provides a larger range of acceptable parameters $\lambda$ than the PD3O (\ref{alg-PD3O}). Yan \cite{yanm2016} showed that the PD3O (\ref{alg-PD3O}) is equivalent to the three operator splitting algorithm \cite{davis2015} for solving the optimization problem (\ref{three-sum}) when $B=I$. In fact, let $\lambda =1$ and $B= I$ in the PD3O (\ref{alg-PD3O}), then it reduces to
\begin{equation}\label{alg-3O}
 \left\{
\begin{aligned}
x^{k} & = prox_{\gamma g}(z^k),\\
y^{k +1} & = prox_{\frac{1}{\gamma}h^{*}} (  \frac{1}{\gamma} (2x^k - z^k - \gamma \nabla f(x^k)) ), \\
z^{k+1} & = x^k - \gamma \nabla f(x^k) - \gamma y^{k +1},
\end{aligned}
\right.
\end{equation}
which is exactly the three operator splitting algorithm proposed in \cite{davis2015}. In the next, we show that the PDFP (\ref{alg-PDFP}) is  equivalent to the PD3O (\ref{alg-PD3O}). Let $z^k = x^k - \gamma \nabla f(x^k) - \gamma B^{*} y^k$ and $x^k = prox_{\gamma g}(z^k)$ in the iteration scheme (\ref{alg-PDFP}), then we have
\begin{equation}\label{alg-PDFP-3O}
 \left\{
\begin{aligned}
x^k & =  prox_{\gamma g}(z^k), \\
y^{k+1} & =  prox_{\frac{\lambda}{\gamma}h^{*}}(y^k + \frac{\lambda}{\gamma}Bx^k).
\end{aligned}
\right.
\end{equation}
For the iterative sequence $\{y^{k+1}\}$ in (\ref{alg-PDFP-3O}), we obtain
\begin{align}
y^{k+1} & = prox_{\frac{\lambda}{\gamma}h^{*}} ( (I - \lambda B B^{*})y^k + \frac{\lambda}{\gamma}B( x^k + \gamma B^{*}y^k )  ), \nonumber \\
& = prox_{\frac{\lambda}{\gamma}h^{*}} ( (I - \lambda B B^{*})y^k + \frac{\lambda}{\gamma}B( x^k + x^k - \gamma \nabla f(x^k) - z^k )   ), \nonumber \\
& = prox_{\frac{\lambda}{\gamma}h^{*}} ( (I - \lambda B B^{*})y^k + \frac{\lambda}{\gamma}B( 2x^k  - \gamma \nabla f(x^k) - z^k )   ).
\end{align}
Further, let $z^{k+1} = x^k - \gamma \nabla f(x^k) - \gamma B^{*} y^{k+1}$, it follows from $x^k = prox_{\gamma g}(z^k)$ that $x^{k+1} = prox_{\gamma g}(z^{k+1}) = prox_{\gamma g}(x^k - \gamma \nabla f(x^k) - \gamma B^{*} y^{k+1})$, which is the same as the updated iterative sequence $\{x^{k+1}\}$ in the iteration scheme of PDFP (\ref{alg-PDFP}). This confirms that the PDFP \cite{chenpj2016} is equivalent to the PD3O \cite{yanm2016}.  The PD3O \cite{yanm2016} only contains the computation of proximity operator of function $g$ for one time, while the PDFP \cite{chenpj2016} needs to compute it two times.

Let $j_k =k$ and the number of inner iteration equals to one, then the iteration scheme of Algorithm \ref{primal-dual-three-operator-splitting} is reduced to
\begin{equation}\label{alg-ours}
 \left\{
\begin{aligned}
x^{k} & = prox_{\gamma g}(z^k),\\
u^{k} & = 2x^k - z^k - \gamma \nabla f(x^k),\\
v^{k +1} & = \frac{v^{k} - \tau B^{*}y^{k} +\tau u^{k}}{1+\tau},\\
y^{k +1} & = \gamma prox_{\frac{\sigma}{\gamma}h^{*}}(\frac{1}{\gamma}y^{k} +\frac{\sigma}{\gamma}B(2v^{k +1} - v^{k}) ),\\
z^{k+1} & = z^k + v^{k +1} - x^k.
\end{aligned}
\right.
\end{equation}
The iteration scheme (\ref{alg-ours}) is different from PDFP \cite{chenpj2016}, Condat-Vu algorithm \cite{condat2013,vu2013ACM} and PD3O \cite{yanm2016}. To the best of our knowledge, there is no existing iterative algorithm, which is equivalent to the iterative algorithm (\ref{alg-ours}). Besides the before mentioned three iterative algorithms, the iteration scheme (\ref{alg-ours}) could be viewed as the fourth type of iterative algorithm to solve the optimization problem (\ref{three-sum}).

\section{Numerical results}
\label{numer_test}

In this section, we study the performance of the proposed iterative algorithms to solve the fused Lasso problem (\ref{fused-lasso}), the constrained total variation regularization problem (\ref{constrained-tv}) and the low-rank total variation image super-resolution problem (\ref{lrtv}). All the experiments are performed in a
standard Lenovo Laptop with Intel(R) Core(TM) i7-4712MQ CPU 2.3GHz and 4GB RAM  under MATLAB (2013a) software.

  \textit{(1) Parameters setting.} Generally speaking, a large selection of iterative parameter $\gamma$ will reduce to fast convergence of the iterative algorithms. So we fix $\gamma = 1.9/L$ for all the proposed iterative algorithms, where $L$ is the Lipschitz constant of $\nabla f$. According to the numerical results of Yan \cite{yanm2016}, the choice of the iterative parameter $\lambda$ has little influence on the convergence speed. Since our proposed iterative algorithms provide a larger range of acceptable parameters of $\lambda$ than the PDFP \cite{chenpj2016} and PD3O \cite{yanm2016}, so we provide two choices of the iterative parameter $\lambda$ in Algorithm \ref{dual-forward-backward} and Algorithm \ref{dual-three-operator-splitting}. We also give two different choice of $\sigma$ and $\tau$ for Algorithm \ref{primal-dual-forward-backward} and Algorithm \ref{primal-dual-three-operator-splitting}. The detailed parameters selection are summarized in Table \ref{para-selection}.

\begin{table}[htbp]
\footnotesize
\centering
\caption{Iterative parameters for the proposed algorithms}
\begin{tabular}{c|c|c}
\hline
Methods & Parameters type I &  Parameters type II \\
\hline
\hline
Algorithm \ref{dual-forward-backward} and Algorithm \ref{dual-three-operator-splitting} & $\lambda_1 = 1.9/\lambda_{max}(BB^{T})$ &  $\lambda_2 = 1/\lambda_{max}(BB^{T})$ \\
\hline
Algorithm \ref{primal-dual-forward-backward} and Algorithm \ref{primal-dual-three-operator-splitting}  & $\sigma_1  = 1/\|B\|^2, \tau_1 = 1$ & $\sigma_2 =\tau_2 = 1/\|B\|$ \\
\hline
\end{tabular}\label{para-selection}
\end{table}

\textit{(2) Performance evaluations.}\ The Signal-to-Noise (SNR) and Normalized Mean Square Distance (NMSD) are used to measure the quality of the reconstructed signal or image, where
$$
SNR =20 log \frac{\|x-\overline{x}\|_{2}}{\|x-x_r\|_{2}},
$$
and
$$
NMSD = \frac{\|x-x_r\|_{2}}{\|x-\overline{x}\|_{2}},
$$
where $\overline{x}$ is the average value of the ideal $x$ and $x_r$ is the reconstructed signal or image.

 \textit{(3) Stopping criteria.}\ We set the relative error between two successive iterative sequences is less than a prescribed tolerance value as the stopping criteria. That is
$$
\frac{\|x^{k+1} - x^k \|_2}{\|x^k\|_2} \leq \epsilon,
$$
where $\epsilon$ is a given small number. If the recontruction $x^{k}$ is an image, then the 2-norm is replaced by Frobenius norm, respectively.

\subsection{Fused Lasso problem}

To solve the Fused Lasso problem (\ref{fused-lasso}),  let's define $f(x) = \frac{1}{2}\|Ax-b\|_{2}^{2}$, $g(x) = \mu_1 \|x\|_{1}$ and $h(Bx) = \mu_2 \|Dx\|_{1}$ in the optimization problem (\ref{three-sum}), then we can apply the proposed iterative algorithms. We follow Ye and Xiao \cite{yegb2011}'s method to generate synthetic dataset. The true coefficient $x=(x_1,x_2, \cdots, x_n)\in R^n$ is generated according to
\begin{equation}\label{}
x_i = \left\{
\begin{aligned}
2, & \quad i=1,2,\cdots, 20, 121, 122, \cdots, 125,\\
3, & \quad i = 41, \\
1, & \quad i = 71, 72, \cdots, 85, \\
0, & \quad else.
\end{aligned}
\right.
\end{equation}
$A\in R^{m\times n}$ is a random matrix whose elements follow the standard Gaussian distribution and $b = Ax +e$, where $e$ is additive Gaussian noise with mean $0$ and variance $0.1$. Here, we set $m=100$ and $n=200$. It's known that the Lispchitz constant of $\nabla f$ is $\|A\|^2$ and the eigenvalues of $DD^{T}$ are $2-2cos(i\pi/n)$ \cite{chenpj2016}, $i=1,2, \cdots, n-1$. So we take $\lambda_{max}(DD^{T})=4$ and $\|D\|=2$. The regularization parameters are set as $\mu_1 = 0.2$ and $\mu_2 = 0.8$.

In the first experiment, we set the number of inner iteration equals to one for all the proposed iterative algorithms, including Algorithm \ref{dual-forward-backward}, Algorithm \ref{primal-dual-forward-backward}, Algorithm \ref{dual-three-operator-splitting} and Algorithm \ref{primal-dual-three-operator-splitting}. In this case, Algorithm \ref{dual-forward-backward} and Algorithm \ref{dual-three-operator-splitting} reduce to the corresponding FDFP \cite{chenpj2016} and PD3O \cite{yanm2016}, respectively. Algorithm \ref{primal-dual-forward-backward} reduces to the Condat-Vu algorithm \cite{condat2013,vu2013ACM}. We test the performance of these iterative algorithms with given parameters according to Table \ref{para-selection}.
The numerical results are reported in Table \ref{lasso-results-1}. The symbol $'-'$ in Table \ref{lasso-results-1} means it exceeds the maximum $5000$ iteration numbers. We can see from Table \ref{lasso-results-1} that Algorithm \ref{primal-dual-forward-backward} and Algorithm \ref{primal-dual-three-operator-splitting} perform nearly the same. Algorithm \ref{dual-forward-backward} and Algorithm \ref{dual-three-operator-splitting} take the parameters type I, which do not converge within the required maximum iteration numbers. Under the choice of parameters type II, we find that all these iterative algorithms converge to the same solution when they converge. However, Algorithm \ref{primal-dual-forward-backward} and Algorithm \ref{primal-dual-three-operator-splitting} require more iteration numbers than Algorithm \ref{dual-forward-backward} and Algorithm \ref{dual-three-operator-splitting}. The proposed Algorithm \ref{dual-forward-backward} and Algorithm \ref{dual-three-operator-splitting} perform more robustness in the parameters type II than in the parameters type I. For Algorithm \ref{primal-dual-forward-backward} and Algorithm \ref{primal-dual-three-operator-splitting}, the number of iterations of parameters type I is less than the number of iterations of parameters type II. But this difference is not too much.

\begin{table}[htbp]
\footnotesize
\centering
\caption{Numerical results obtained by  Algorithm \ref{dual-forward-backward}, Algorithm \ref{primal-dual-forward-backward}, Algorithm \ref{dual-three-operator-splitting} and Algorithm \ref{primal-dual-three-operator-splitting} in terms of NMSD, SNR(dB) and the iteration numbers (Iter).}
\begin{tabular}{c|c|ccccccc}
\hline
Parameters  & \multirow{2}[1]{*}{Methods} &  \multicolumn{3}{c}{$ \epsilon = 10^{-4}$} &  & \multicolumn{3}{c}{$ \epsilon = 10^{-8}$} \\ \cline{3-5} \cline{7-9}
type & &  $NMSD$ & $SNR(dB)$ & $Iter$ & & $NMSD$ & $SNR(dB)$ & $Iter$ \\
\hline
\hline
\multirow{4}[1]{*}{$I$}  & Algorithm \ref{dual-forward-backward} &  $0.0063$ & $44.2488$ & $-$ & &  $0.0063$ & $44.2488$ & $-$ \\
 & Algorithm \ref{primal-dual-forward-backward} &  $0.0066$ & $43.6446$ & $757$ & & $0.0060$ & $44.5044$ & $986$ \\
 & Algorithm \ref{dual-three-operator-splitting} &  $0.0067$ & $43.6584$ & $-$ & &  $0.0067$ & $43.6584$ & $-$ \\
 & Algorithm \ref{primal-dual-three-operator-splitting} &  $0.0066$ & $43.6411$ & $758$ & &  $0.0060$ & $44.5044$ & $987$ \\
\hline
\multirow{4}[1]{*}{$II$}  & Algorithm \ref{dual-forward-backward} &  $0.0060$ & $44.4103$ & $387$ & &  $0.0060$ & $44.5044$ & $626$ \\
 & Algorithm \ref{primal-dual-forward-backward} &  $0.0076$ & $42.3389$ & $1119$ & & $0.0060$ & $44.5044$ & $1471$ \\
 & Algorithm \ref{dual-three-operator-splitting} &  $0.0060$ & $44.4048$ & $387$ & &  $0.0060$ & $44.5044$ & $627$ \\
 & Algorithm \ref{primal-dual-three-operator-splitting} &  $0.0076$ & $42.3340$ & $1120$ & &  $0.0060$ & $44.5044$ & $1471$ \\
\hline
\end{tabular}\label{lasso-results-1}
\end{table}

Further, we plot the objective function values and SNR values versus the number of iterations in Figure \ref{fvalue1} and Figure \ref{snr1}, respectively. Figure \ref{signal1} shows the recovered signal and the true sparse signal using Algorithm \ref{dual-forward-backward}, Algorithm \ref{primal-dual-forward-backward}, Algorithm \ref{dual-three-operator-splitting} and Algorithm \ref{primal-dual-three-operator-splitting} which taking parameters type I.

   \begin{figure}
      \setlength{\abovecaptionskip}{-10pt}
   \begin{center}
   \begin{tabular}{c}
   \scalebox{0.65}{\includegraphics{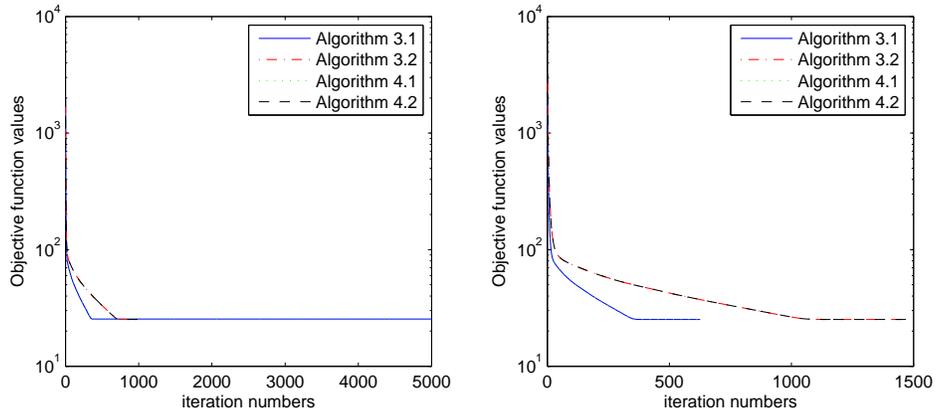}}
   \end{tabular}
   \end{center}
   \caption[]{The objective function values versus the number of iterations for Algorithm \ref{dual-forward-backward}, Algorithm \ref{primal-dual-forward-backward}, Algorithm \ref{dual-three-operator-splitting} and Algorithm \ref{primal-dual-three-operator-splitting}. The left figure is obtained from parameters type I and the right figure is obtained from parameters type II.}
   { \label{fvalue1}}
   \end{figure}

   \begin{figure}
      \setlength{\abovecaptionskip}{-10pt}
   \begin{center}
   \begin{tabular}{c}
   \scalebox{0.65}{\includegraphics{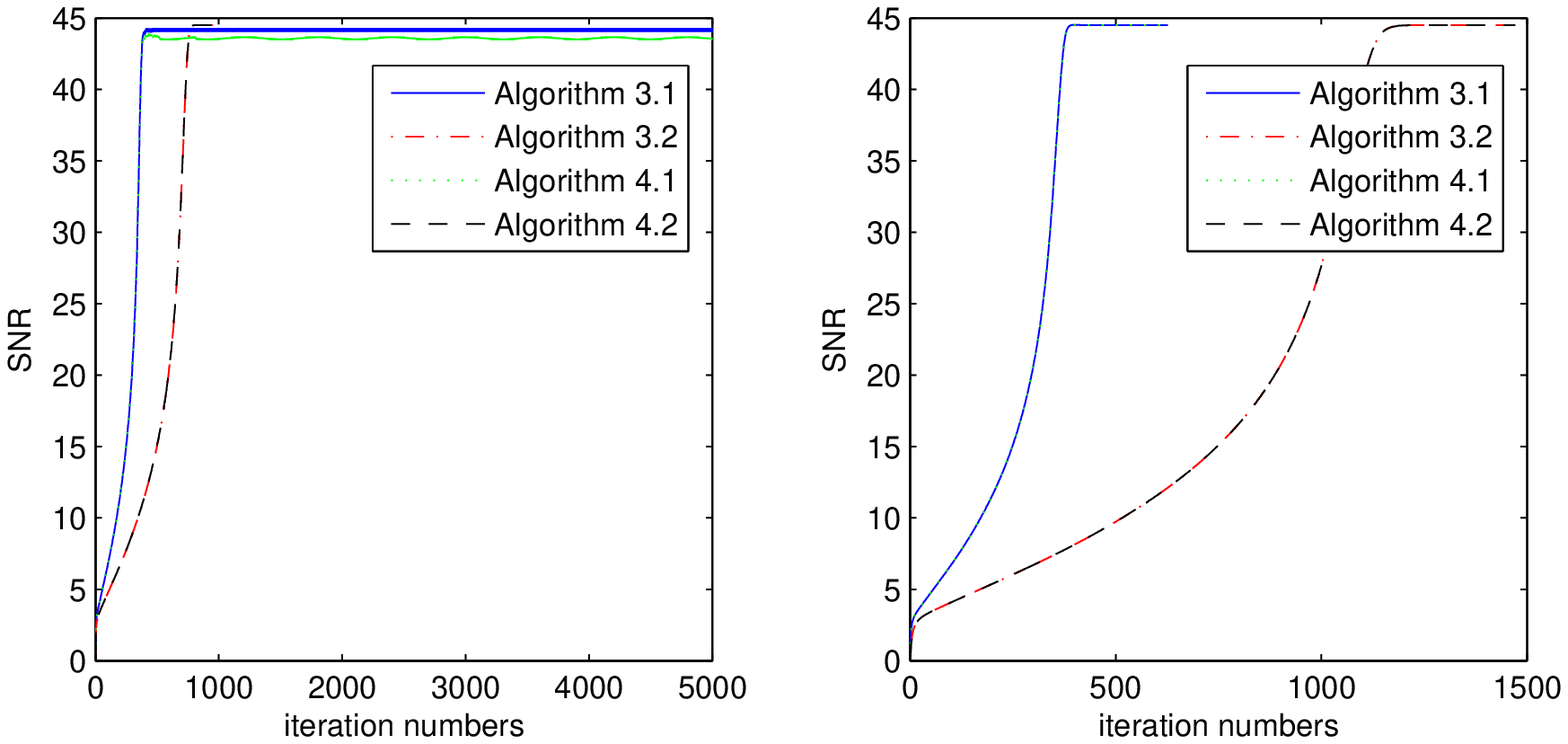}}
   \end{tabular}
   \end{center}
   \caption[]{The SNR values versus the number of iterations for Algorithm \ref{dual-forward-backward}, Algorithm \ref{primal-dual-forward-backward}, Algorithm \ref{dual-three-operator-splitting} and Algorithm \ref{primal-dual-three-operator-splitting}. The left figure is obtained from parameters type I and the right figure is obtained from parameters type II.}
   { \label{snr1}}
   \end{figure}

   \begin{figure}
      \setlength{\abovecaptionskip}{-10pt}
   \begin{center}
   \begin{tabular}{c}
   \scalebox{0.65}{\includegraphics{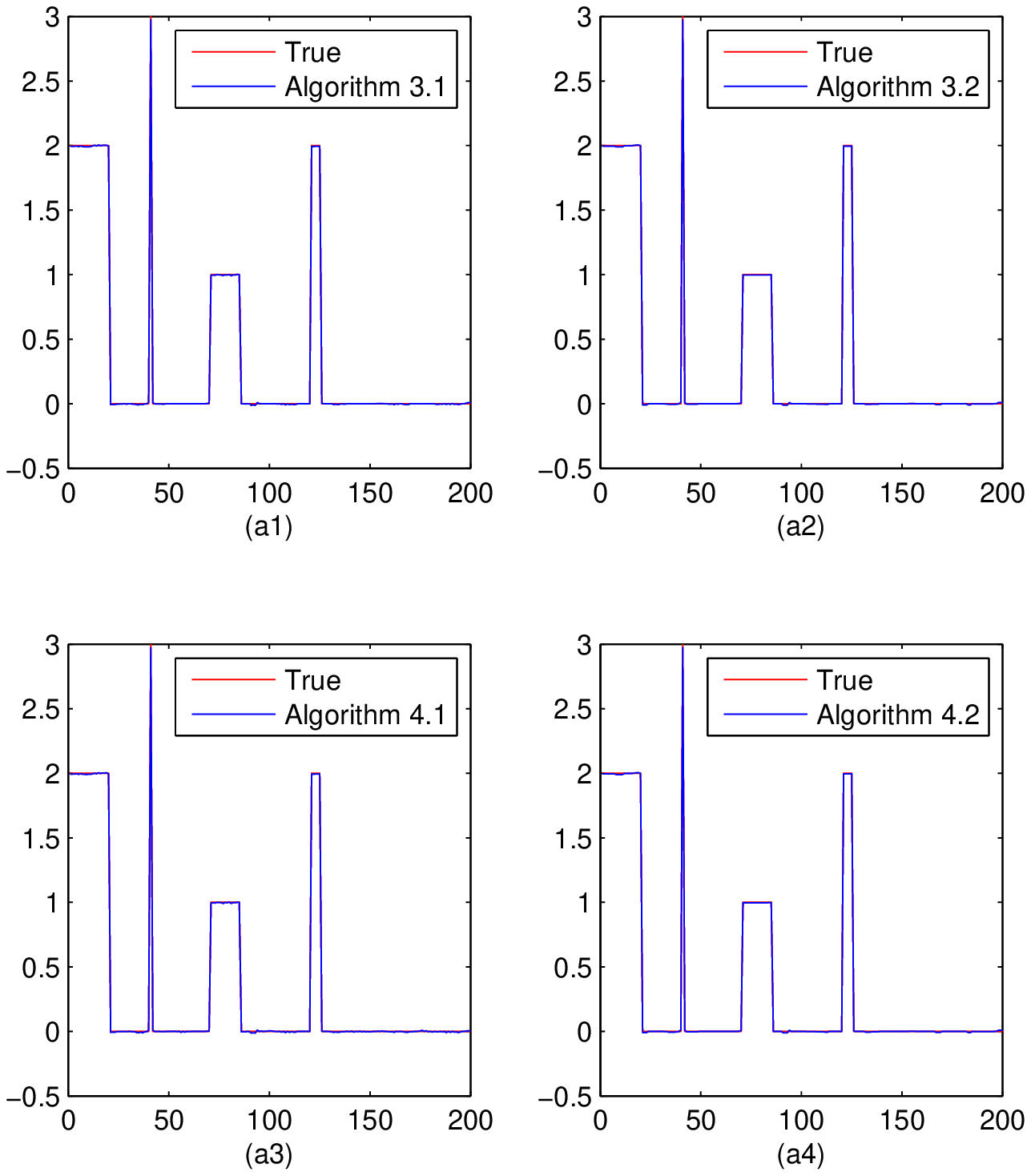}}
   \end{tabular}
   \end{center}
   \caption[]{The true sparse signal and the reconstructed results by using Algorithm \ref{dual-forward-backward}, Algorithm \ref{primal-dual-forward-backward}, Algorithm \ref{dual-three-operator-splitting} and Algorithm \ref{primal-dual-three-operator-splitting}.}
   { \label{signal1}}
   \end{figure}

In the second experiment, we demonstrate how the performance of the proposed iterative algorithms is influenced by  the number of inner iterations.
The results are reported in Table \ref{lasso-results-inner-1} and Table \ref{lasso-results-inner-2}, respectively. We can see from Table \ref{lasso-results-1} that the Algorithm \ref{dual-forward-backward} and Algorithm \ref{dual-three-operator-splitting} do not converge within the given maximum iteration numbers when the  parameters type I is selected. In Table \ref{lasso-results-inner-1}, when the number of inner iterations exceeds one, we find that the Algorithm \ref{dual-forward-backward} and Algorithm \ref{dual-three-operator-splitting} converge. It can be seen from Table \ref{lasso-results-inner-1} and Table \ref{lasso-results-inner-2} that by increasing the number of inner iterations, the number of outer iterations required by all the proposed iterative algorithms is reduced, but this decreasing trend will stop over a certain value.

\begin{table}[htbp]
\footnotesize
\centering
\caption{Numerical results obtained by  Algorithm \ref{dual-forward-backward}, Algorithm \ref{primal-dual-forward-backward}, Algorithm \ref{dual-three-operator-splitting} and Algorithm \ref{primal-dual-three-operator-splitting} with the choice of parameters type I.}
\begin{tabular}{c|c|ccccccc}
\hline
Inner iteration  & \multirow{2}[1]{*}{Methods} &  \multicolumn{3}{c}{$ \epsilon = 10^{-4}$} &  & \multicolumn{3}{c}{$ \epsilon = 10^{-8}$} \\ \cline{3-5} \cline{7-9}
numbers & &  $NMSD$ & $SNR(dB)$ & $Iter$ & & $NMSD$ & $SNR(dB)$ & $Iter$ \\
\hline
\hline
\multirow{4}[1]{*}{$2$}  & Algorithm \ref{dual-forward-backward} &  $0.0062$ & $44.2197$ & $385$ & &  $0.0060$ & $44.5044$ & $500$ \\
 & Algorithm \ref{primal-dual-forward-backward} &  $0.0063$ & $44.0661$ & $510$ & & $0.0060$ & $44.5044$ & $659$ \\
 & Algorithm \ref{dual-three-operator-splitting} &  $0.0061$ & $44.2254$ & $385$ & &  $0.0060$ & $44.5044$ & $500$ \\
 & Algorithm \ref{primal-dual-three-operator-splitting} &  $0.0063$ & $44.0643$ & $510$ & &  $0.0060$ & $44.5044$ & $659$ \\
\hline
\multirow{4}[1]{*}{$10$}  & Algorithm \ref{dual-forward-backward} &  $0.0062$ & $44.1763$ & $385$ & &  $0.0060$ & $44.5044$ & $505$ \\
 & Algorithm \ref{primal-dual-forward-backward} &  $0.0062$ & $44.2187$ & $386$ & & $0.0060$ & $44.5044$ & $505$ \\
 & Algorithm \ref{dual-three-operator-splitting} &  $0.0062$ & $44.1763$ & $385$ & &  $0.0060$ & $44.5044$ & $505$ \\
 & Algorithm \ref{primal-dual-three-operator-splitting} &  $0.0062$ & $44.2176$ & $386$ & &  $0.0060$ & $44.5044$ & $505$ \\
\hline
\multirow{4}[1]{*}{$20$}  & Algorithm \ref{dual-forward-backward} &  $0.0062$ & $44.1723$ & $385$ & &  $0.0060$ & $44.5044$ & $506$ \\
 & Algorithm \ref{primal-dual-forward-backward} &  $0.0062$ & $44.1762$ & $385$ & & $0.0060$ & $44.5044$ & $505$ \\
 & Algorithm \ref{dual-three-operator-splitting} &  $0.0062$ & $44.1720$ & $385$ & &  $0.0060$ & $44.5044$ & $506$ \\
 & Algorithm \ref{primal-dual-three-operator-splitting} &  $0.0062$ & $44.1752$ & $385$ & &  $0.0060$ & $44.5044$ & $505$ \\
 \hline
\end{tabular}\label{lasso-results-inner-1}
\end{table}

\begin{table}[htbp]
\footnotesize
\centering
\caption{Numerical results obtained by  Algorithm \ref{dual-forward-backward}, Algorithm \ref{primal-dual-forward-backward}, Algorithm \ref{dual-three-operator-splitting} and Algorithm \ref{primal-dual-three-operator-splitting} with the choice of parameters type II.}
\begin{tabular}{c|c|ccccccc}
\hline
Inner iteration  & \multirow{2}[1]{*}{Methods} &  \multicolumn{3}{c}{$ \epsilon = 10^{-4}$} &  & \multicolumn{3}{c}{$ \epsilon = 10^{-8}$} \\ \cline{3-5} \cline{7-9}
numbers & &  $NMSD$ & $SNR(dB)$ & $Iter$ & & $NMSD$ & $SNR(dB)$ & $Iter$ \\
\hline
\hline
\multirow{4}[1]{*}{$2$}  & Algorithm \ref{dual-forward-backward} &  $0.0061$ & $44.3020$ & $386$ & &  $0.0060$ & $44.5044$ & $506$ \\
 & Algorithm \ref{primal-dual-forward-backward} &  $0.0066$ & $43.6520$ & $683$ & & $0.0060$ & $44.5044$ & $895$ \\
 & Algorithm \ref{dual-three-operator-splitting} &  $0.0061$ & $44.2980$ & $386$ & &  $0.0060$ & $44.5044$ & $500$ \\
 & Algorithm \ref{primal-dual-three-operator-splitting} &  $0.0066$ & $43.6464$ & $683$ & &  $0.0060$ & $44.5044$ & $895$ \\
\hline
\multirow{4}[1]{*}{$10$}  & Algorithm \ref{dual-forward-backward} &  $0.0062$ & $44.1838$ & $385$ & &  $0.0060$ & $44.5044$ & $505$ \\
 & Algorithm \ref{primal-dual-forward-backward} &  $0.0062$ & $44.1873$ & $392$ & & $0.0060$ & $44.5044$ & $514$ \\
 & Algorithm \ref{dual-three-operator-splitting} &  $0.0062$ & $44.1826$ & $385$ & &  $0.0060$ & $44.5044$ & $505$ \\
 & Algorithm \ref{primal-dual-three-operator-splitting} &  $0.0062$ & $44.1864$ & $392$ & &  $0.0060$ & $44.5044$ & $514$ \\
\hline
\multirow{4}[1]{*}{$20$}  & Algorithm \ref{dual-forward-backward} &  $0.0062$ & $44.1756$ & $385$ & &  $0.0060$ & $44.5044$ & $506$ \\
 & Algorithm \ref{primal-dual-forward-backward} &  $0.0062$ & $44.2183$ & $386$ & & $0.0060$ & $44.5044$ & $506$ \\
 & Algorithm \ref{dual-three-operator-splitting} &  $0.0062$ & $44.1747$ & $385$ & &  $0.0060$ & $44.5044$ & $506$ \\
 & Algorithm \ref{primal-dual-three-operator-splitting} &  $0.0062$ & $44.2179$ & $386$ & &  $0.0060$ & $44.5044$ & $506$ \\
 \hline
\end{tabular}\label{lasso-results-inner-2}
\end{table}

Take inner iteration numbers ten and parameters type I, Figure \ref{fvalue_snr1} shows the objective function values and the SNR values versus the number of iterations. We can see from Figure \ref{fvalue_snr1} that the convergence of the four iterative algorithms is nearly the same.

   \begin{figure}
      \setlength{\abovecaptionskip}{-10pt}
   \begin{center}
   \begin{tabular}{c}
   \scalebox{0.65}{\includegraphics{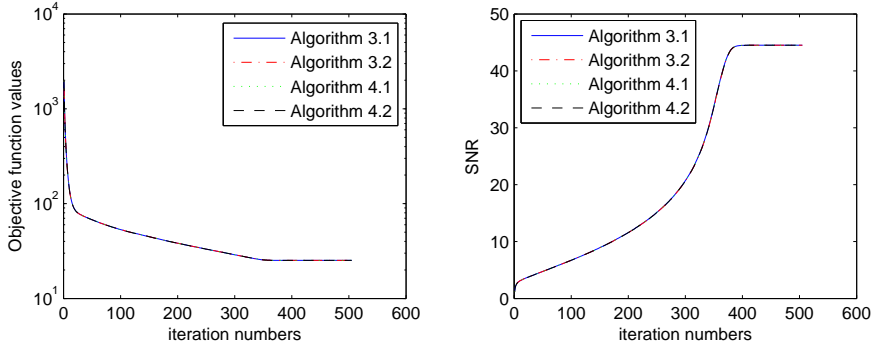}}
   \end{tabular}
   \end{center}
   \caption[]{The comparison results of the four iterative algorithms. }
   { \label{fvalue_snr1}}
   \end{figure}

\subsection{Constrained total variation regularization problem}

In this subsection, we employ the proposed iterative algorithms to solve the constrained total variation regularization problem (\ref{constrained-tv}) arising in computed tomography (CT) image reconstruction.
In the following, let's recall the discrete definition of the total variation.

\begin{definition}(\cite{barberoandsr12010,Micchelliandshen2011})\label{tv-definition}
Let $x\in R^{n}$ be a $\sqrt{n}\times \sqrt{n}$ image. The norm $\|\cdot\|_{2}$ and $\|\cdot\|_{1}$ denote the usual 2-norm and 1-norm of vectors, respectively. First, we define a first order difference matrix $B$ as follows,
$$
B_{n \times n} = \left(
                       \begin{array}{ccccc}
                         -1 & 1 & 0 & \cdots & 0 \\
                         0 & -1 & 1 & \cdots & 0 \\
                          &  & \cdots &  &  \\
                         0 & 0 & \cdots & 0 & 0 \\
                       \end{array}
                     \right),
$$
and the matrix $D$ is defined by
\begin{equation}\label{difference}
D = \left(
      \begin{array}{c}
        I \otimes B \\
        B \otimes I \\
      \end{array}
    \right),
\end{equation}
where $I$ denotes the identity matrix and $\otimes$ denotes the Kronecker inner product.

(i)\  The isotropic total variation (ITV) is defined by $\|x\|_{ITV} = \|Dx\|_{2,1}$, where
$\|y\|_{2,1} = \sum_{i=1}^{n}\sqrt{y_{i}^{2}+y_{n+i}^{2}}$, $y\in R^{2n\times 1}$;

(ii)\ The anisotropic total variation (ATV) is defined by $\|x\|_{ATV} = \|Dx\|_{1}$.

\end{definition}

In this case, let $f(x)=\frac{1}{2}\|Ax-b\|_{2}^{2}$, $g(x)=\delta_{C}(x)$ ($\delta_{C}(x)$ is the indicator function of the closed convex set $C$), and $h(Bx)=\mu \|x\|_{TV}$. The closed convex set $C$ is set as nonnegative set, i.e., $C = \{ x\in R^n | x_i \geq 0 \}$.
We use the standard Shepp-Logan phantom as the reconstructed image (See Figure \ref{shepp-logan}). The phantom is scanned by fan-beam way with 20 views distributed randomly from 0 to 360 and 320 rays in each view. So the size of the system matrix $A$ is $6400\times 65536$. The simulated projection data is generated by AIRtools \cite{hansen1}. The data vector $b$ is assumed to be corrupted by random Gaussian noise with zero mean and 0.01 variance. For the iterative parameters, we let $\lambda = 1/\lambda_{max}(DD^{T})$ for Algorithm \ref{dual-forward-backward} and Algorithm \ref{dual-three-operator-splitting}, and $\sigma = 1/\lambda_{max}(DD^{T}), \tau =1$ for Algorithm \ref{primal-dual-forward-backward} and Algorithm \ref{primal-dual-three-operator-splitting}. It is known that the discrete gradient matrix $D$ (\ref{difference}) has $\lambda_{max}(DD^{T})=8$. The Lipschitz constant of $\nabla f(x)$ is estimated via the power iteration method. The regularization parameter $\mu = 0.5$ is used.
The numerical results are reported in Table \ref{tv-results}.

We can see from Table \ref{tv-results} that the performance of Algorithm \ref{dual-forward-backward} is the same as Algorithm \ref{dual-three-operator-splitting}. This result confirms that the Algorithm \ref{dual-forward-backward} and Algorithm \ref{dual-three-operator-splitting} are equivalent. Similarly, Algorithm \ref{primal-dual-forward-backward} and Algorithm \ref{primal-dual-three-operator-splitting} are the same. When the number of inner iterations is one, the primal-dual based Algorithm \ref{primal-dual-forward-backward} and Algorithm \ref{primal-dual-three-operator-splitting} require more iterations than the Algorithm \ref{dual-forward-backward} and Algorithm \ref{dual-three-operator-splitting}, which are proposed by the dual method. By increasing the number of inner iterations, the number of outer iterations required to reach the same accuracy by Algorithm \ref{dual-forward-backward}, Algorithm \ref{primal-dual-forward-backward}, Algorithm \ref{dual-three-operator-splitting} and Algorithm \ref{primal-dual-three-operator-splitting}  is getting closer. When the number of inner iterations is ten, we can see that the four iterative algorithms converge to the optimal solution at the same rate. The corresponding objective function values are shown in the Figure \ref{tv-fvalue-snr} left, and the SNR values versus the number of iterations is plotted in the Figure \ref{tv-fvalue-snr} right. The reconstructed images of the four iterative algorithms are shown in Figure \ref{tv-rec-image}.

   \begin{figure}
      \setlength{\abovecaptionskip}{-20pt}
   \begin{center}
   \begin{tabular}{c}
   \scalebox{0.65}{\includegraphics{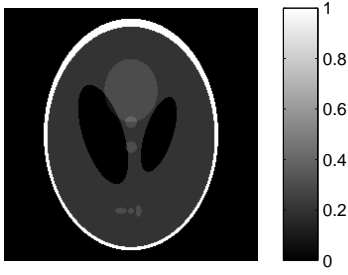}}
   \end{tabular}
   \end{center}
   \caption[]{The standard $256\times 256$ Shepp-Logan phantom.}
   { \label{shepp-logan}}
   \end{figure}

\begin{table}[htbp]
\scriptsize
\centering
\caption{Numerical results obtained by  Algorithm \ref{dual-forward-backward}, Algorithm \ref{primal-dual-forward-backward}, Algorithm \ref{dual-three-operator-splitting} and Algorithm \ref{primal-dual-three-operator-splitting} for solving the constrained TV problem in CT image reconstruction.}
\begin{tabular}{c|c|ccccccccccc}
\hline
Inner iteration  & \multirow{2}[1]{*}{Methods} &  \multicolumn{3}{c}{$ \epsilon = 10^{-4}$} &  & \multicolumn{3}{c}{$ \epsilon = 10^{-6}$} &  & \multicolumn{3}{c}{$ \epsilon = 10^{-8}$}\\ \cline{3-5} \cline{7-9} \cline{11-13}
numbers & &  $NMSD$ & $SNR(dB)$ & $Iter$ & & $NMSD$ & $SNR(dB)$ & $Iter$ & & $NMSD$ & $SNR(dB)$ & $Iter$ \\
\hline
\hline
\multirow{4}[1]{*}{$1$}  & Algorithm \ref{dual-forward-backward} &  $0.1756$ & $15.1103$ & $1036$ & &  $0.0213$ & $33.4403$ & $9524$ & &  $0.0209$ & $33.6169$ & $14615$\\
 & Algorithm \ref{primal-dual-forward-backward} &  $0.2344$ & $12.6014$ & $1021$ & & $0.0219$ & $33.2001$ & $16799$ & &  $0.0209$ & $33.6160$ & $27983$ \\
 & Algorithm \ref{dual-three-operator-splitting} &  $0.1756$ & $15.1102$ & $1036$ & &  $0.0213$ & $33.4404$ & $9523$ & &  $0.0209$ & $33.6169$ & $14615$ \\
 & Algorithm \ref{primal-dual-three-operator-splitting} &  $0.2344$ & $12.6014$ & $1021$ & &  $0.0219$ & $33.2002$ & $16799$ & &  $0.0209$ & $33.6160$ & $27982$ \\
\hline
\multirow{4}[1]{*}{$2$}  & Algorithm \ref{dual-forward-backward} &  $0.1756$ & $15.1071$ & $1035$ & &  $0.0213$ & $33.4399$ & $9535$ & &  $0.0209$ & $33.6169$ & $14635$\\
 & Algorithm \ref{primal-dual-forward-backward} &  $0.2035$ & $13.8278$ & $1004$ & & $0.0217$ & $33.2875$ & $11600$ & &  $0.0209$ & $33.6166$ & $19156$ \\
 & Algorithm \ref{dual-three-operator-splitting} &  $0.1756$ & $15.1071$ & $1035$ & &  $0.0213$ & $33.4399$ & $9534$ & &  $0.0209$ & $33.6169$ & $14634$ \\
 & Algorithm \ref{primal-dual-three-operator-splitting} &  $0.2035$ & $13.8278$ & $1004$ & &  $0.0217$ & $33.2874$ & $11599$ & &  $0.0209$ & $33.6166$ & $19156$ \\
\hline
\multirow{4}[1]{*}{$10$}  & Algorithm \ref{dual-forward-backward} &  $0.1757$ & $15.1038$ & $1034$ & &  $0.0213$ & $33.4396$ & $9545$ & &  $0.0209$ & $33.6169$ & $14652$\\
 & Algorithm \ref{primal-dual-forward-backward} &  $0.1758$ & $15.0997$ & $1034$ & & $0.0213$ & $33.4388$ & $9548$ & &  $0.0209$ & $33.6169$ & $14665$ \\
 & Algorithm \ref{dual-three-operator-splitting} &  $0.1757$ & $15.1038$ & $1034$ & &  $0.0213$ & $33.4397$ & $9545$ & &  $0.0209$ & $33.6169$ & $14652$ \\
 & Algorithm \ref{primal-dual-three-operator-splitting} &  $0.1758$ & $15.0997$ & $1034$ & &  $0.0213$ & $33.4388$ & $9548$ & &  $0.0209$ & $33.6169$ & $14665$ \\
\hline
\end{tabular}\label{tv-results}
\end{table}

   \begin{figure}
      \setlength{\abovecaptionskip}{-10pt}
   \begin{center}
   \begin{tabular}{c}
   \scalebox{0.65}{\includegraphics{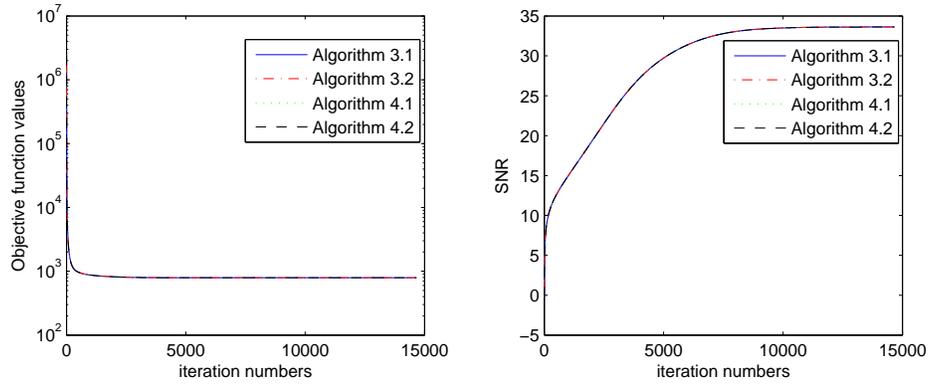}}
   \end{tabular}
   \end{center}
   \caption[]{The comparison results of the four iterative algorithms in terms of the objective function values and SNR values versus the number of iterations in CT image reconstruction.}
   { \label{tv-fvalue-snr}}
   \end{figure}

   \begin{figure}
      \setlength{\abovecaptionskip}{-20pt}
   \begin{center}
   \begin{tabular}{c}
   \scalebox{0.8}{\includegraphics{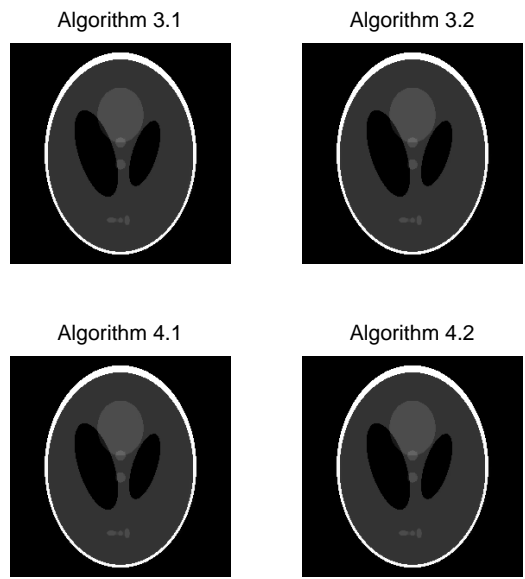}}
   \end{tabular}
   \end{center}
   \caption[]{The reconstructed images by Algorithm \ref{dual-forward-backward}, Algorithm \ref{primal-dual-forward-backward}, Algorithm \ref{dual-three-operator-splitting} and Algorithm \ref{primal-dual-three-operator-splitting} in CT image reconstruction.}
   { \label{tv-rec-image}}
   \end{figure}

\subsection{Low-rank total variation image super-resolution problem}

High-resolution (HR) images are always needed in various medical imaging diagnoses, such as CT, MRI, and PET et al. However, in practice, due to the limitations of the imaging acquisition systems or to fast the medical imaging reconstruction time, lower-resolution (LR) images are obtained sometimes.
There is much demand to generate high-resolution images from lower-resolution images. Super-resolution (SR) image reconstruction is the technique that can achieve this goal. In this subsection, we apply the proposed iterative algorithms to solve the low rank total variation (LRTV) image super-resolution optimization problem (\ref{lrtv}). In the LRTV optimization problem (\ref{lrtv}), there is two regularization terms, one is the nuclear norm $\|X\|_{*}$, which is a convex relaxation of lower rank constraint. The other is the total variation $\|X\|_{TV}$, which can be represented by a combination of a convex function with a linear operator. It is easy to notice that the  LRTV optimization problem (\ref{lrtv}) is a special case of the optimization problem (\ref{three-sum}) by letting $f(X) = \frac{1}{2}\| DSX - T \|_{F}^{2}$, $g(X)=\lambda_1 \|X\|_{*}$ and $h(BX) = \lambda_2 \|X\|_{TV}$. It is worth mentioning that the proximity operator of nuclear norm has a closed-form solution according to \cite{Caijf2010SIAM}.

In the numerical experiment, we use the same data-set as \cite{shif2015}. In detail, a representative 2D slice from T1 MR phantom in Brainweb\footnote{http://www.bic.mni.mcgill.ca/brainweb} is selected, which has a size of $217\times 181$ with a resolution of 1mm (Figure \ref{Ori_HR_image}).
   \begin{figure}
      \setlength{\abovecaptionskip}{-20pt}
   \begin{center}
   \begin{tabular}{c}
   \scalebox{0.8}{\includegraphics{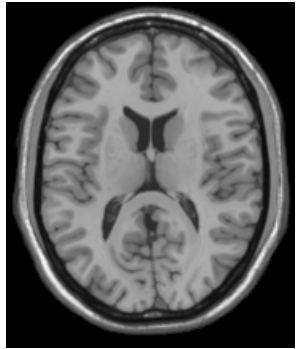}}
   \end{tabular}
   \end{center}
   \caption[]{Original high-resolution image.}
   { \label{Ori_HR_image}}
   \end{figure}
The blurring operator $S$ is implemented using a Gaussian Kernel with a standard deviation of 1 pixel. The blurred image is then down-sampled by averaging every 4 pixels. The upsampled operator is implemented by nearest-neighbor interpolation. Besides SNR and NMSD, we also use Structural Similarity Index (SSIM) \cite{wangzhou} to evaluate the quality of reconstruction images. The SSIM is defined as,
$$
SSIM(f,g) = \frac{(2\mu_{f}\mu_{g}+c_1)(2\sigma_{fg}+c_2)}{(\mu_{f}^{2}+\mu_{g}^{2}+c_1)(\sigma_{f}^{2}+\sigma_{g}^{2}+c_1)},
$$
where $\mu_f$ and $\mu_g$ are the mean values respectively in the original HR image $f$ and recovered image $g$, $\sigma_{f}^{2}$ and $\sigma_{g}^{2}$ are the variances, $\sigma_{fg}$ is the covariance of two images, $c_1 = (k_1 L)^2$ and $c_2 = (k_2 L)^2$ with $k_1 = 0.01$ and $k_2 = 0.03$, and $L$ is the dynamic range of pixel values. SSIM ranges from 0 to 1, and 1 means perfect recovery.

We test the performance of the proposed iterative algorithms with a choice of regularization parameters $\lambda_1 = 0.01$ and $\lambda_2 =0.01$ according to \cite{shif2015}. In this numerical tests, we don't know exactly the operator norm of the down-sampling operator $D$ and the blurring operator $S$, so we tune the parameter $\gamma$ as far as possible to ensure that the proposed iterative algorithms converge. We set $\gamma = 0.1$. The other iterative parameters are chosen the same as Subsection 5.2. The numerical results are summarized in Table \ref{lrtv-results}. They symbol $'-'$ means that the maximum iteration number $1\times 10^{5}$ exceeds. We can observe that the proposed iterative algorithms perform the same in terms of NMSD, SNR and SSIM when the stopping criterion $\epsilon = 10^{-8}$ and the number of inner iteration number equals to ten. The corresponding SNR and objective function values versus iteration numbers for all the proposed algorithms are plotted in Figure \ref{SR_10_fvalue_snr}. We can see from Figure \ref{SR_10_fvalue_snr} that the SNR is not monotonically increasing, but first increases and then decreases. For all the proposed algorithms, the maximum SNR is the same and equals to $22.2399(dB)$. The number of iterations used is $48671$, $48718$, $48671$ and $48718$, respectively. This result is consistent with our previous numerical results.
In order to have visual inspection, Figure \ref{SR_10_image} shows the reconstructed images for the proposed iterative algorithms.

\begin{table}[htbp]
\scriptsize
\centering
\caption{Numerical results of the proposed iterative algorithms for solving the low-rank total variation image super-resolution optimization problem (\ref{lrtv}).}
\begin{tabular}{c|c|ccccccccc}
\hline
Inner iteration  & \multirow{2}[1]{*}{Methods} &  \multicolumn{4}{c}{$ \epsilon = 10^{-6}$} &  & \multicolumn{4}{c}{$ \epsilon = 10^{-8}$}  \\ \cline{3-6} \cline{8-11}
numbers & &  $NMSD$ & $SNR(dB)$ & $SSIM$ & $Iter$ & & $NMSD$ & $SNR(dB)$ & $SSIM$ & $Iter$  \\
\hline
\hline
\multirow{4}[1]{*}{$1$}  & Algorithm \ref{dual-forward-backward} &  $0.0781$ & $22.1440$ & $0.9796$ & $34573$  & &  $0.0797$ & $21.9701$ & $0.9642$ &  $-$ \\
 & Algorithm \ref{primal-dual-forward-backward} &    $0.0890$ & $21.0146$ & $0.9762$ & $19046$  & &  $0.0773$ & $22.2393$ & $0.9635$ &  $-$ \\
 & Algorithm \ref{dual-three-operator-splitting} &  $0.0781$ & $22.1440$ & $0.9796$ & $34573$  & &  $0.0797$ & $21.9701$ & $0.9642$ &  $-$ \\
 & Algorithm \ref{primal-dual-three-operator-splitting} &  $0.0890$ & $21.0146$ & $0.9762$ & $19046$  & &  $0.0773$ & $22.2393$ & $0.9635$ &  $-$ \\
\hline
\multirow{4}[1]{*}{$2$}  & Algorithm \ref{dual-forward-backward} &  $0.0781$ & $22.1440$ & $0.9602$ & $34573$  & &  $0.0797$ & $21.9701$ & $0.9642$ &  $-$ \\
 & Algorithm \ref{primal-dual-forward-backward} &    $0.0814$ & $21.7906$ & $0.9549$ & $29252$  & &  $0.0783$ & $22.1244$ & $0.9650$ &  $-$ \\
 & Algorithm \ref{dual-three-operator-splitting} &  $0.0781$ & $22.1440$ & $0.9602$ & $34573$  & &  $0.0797$ & $21.9701$ & $0.9642$ &  $-$ \\
 & Algorithm \ref{primal-dual-three-operator-splitting} &  $0.0814$ & $21.7906$ & $0.9549$ & $29252$  & &  $0.0783$ & $22.1244$ & $0.9650$ &  $-$ \\
\hline
\multirow{4}[1]{*}{$10$}  & Algorithm \ref{dual-forward-backward} &  $0.0781$ & $22.1440$ & $0.9602$ & $34573$  & &  $0.0797$ & $21.9701$ & $0.9642$ &  $-$ \\
 & Algorithm \ref{primal-dual-forward-backward} &    $0.0781$ & $22.1435$ & $0.9602$ & $34576$  & &  $0.0797$ & $21.9707$ & $0.9642$ &  $-$ \\
 & Algorithm \ref{dual-three-operator-splitting} &  $0.0781$ & $22.1440$ & $0.9602$ & $34573$  & &  $0.0797$ & $21.9701$ & $0.9642$ &  $-$ \\
 & Algorithm \ref{primal-dual-three-operator-splitting} &  $0.0781$ & $22.1435$ & $0.9602$ & $34576$  & &  $0.0797$ & $21.9707$ & $0.9642$ &  $-$ \\
\hline
\end{tabular}\label{lrtv-results}
\end{table}

   \begin{figure}
      \setlength{\abovecaptionskip}{-10pt}
   \begin{center}
   \begin{tabular}{c}
   \scalebox{0.7}{\includegraphics{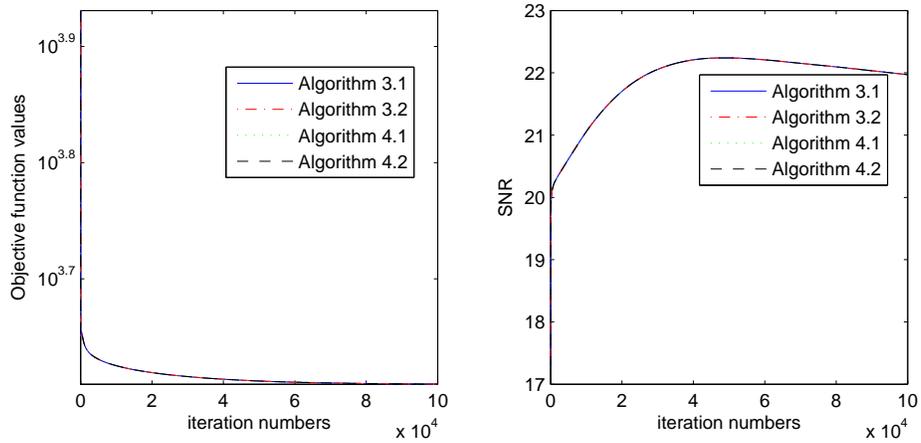}}
   \end{tabular}
   \end{center}
   \caption[]{Objective function values versus iterations (left) and SNR versus iterations (right) in low rank total variation image super-resolution. }
   { \label{SR_10_fvalue_snr}}
   \end{figure}

   \begin{figure}
      \setlength{\abovecaptionskip}{-10pt}
   \begin{center}
   \begin{tabular}{c}
   \scalebox{0.8}{\includegraphics{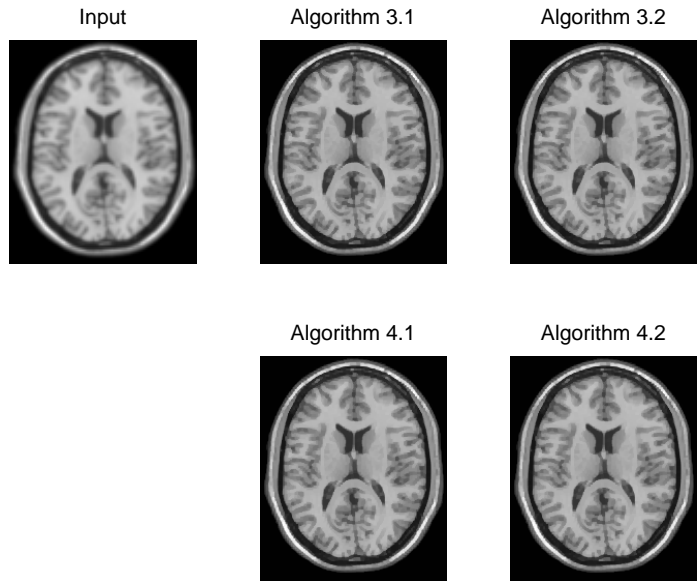}}
   \end{tabular}
   \end{center}
   \caption[]{The reconstructed images by Algorithm \ref{dual-forward-backward}, Algorithm \ref{primal-dual-forward-backward}, Algorithm \ref{dual-three-operator-splitting} and Algorithm \ref{primal-dual-three-operator-splitting} in low rank total variation image super-resolution. }
   { \label{SR_10_image}}
   \end{figure}

\section{Conclusions}

In this paper, we study a class of convex optimization problem (\ref{three-sum}), which minimizes the sum of three convex functions. We make full use of the gradient of the differentiable convex function of the objective function. To derive effective iterative algorithms for solving the considered optimization problem (\ref{three-sum}), we employ two monotone operator splitting methods: the forward-backward splitting method and the three operator splitting method. In both of the iteration schemes, we are required to compute proximity operator of $g+h\circ B$ and $h\circ B$, respectively. Although these proximity operators have no closed-form solution, it can be solved effectively from dual and primal-dual approach. It is interesting that we find three existing iterative algorithms for solving the optimization problem (\ref{three-sum}) are a special case of our proposed iterative algorithms, respectively. We prove the convergence of the proposed iterative algorithm in finite-dimensional Hilbert spaces.
 We also provide a decompose way to select iterative parameters of the Condat-Vu algorithm. The equivalence between the PDFP \cite{chenpj2016} and the PD3O \cite{yanm2016} is presented. Numerical experiments on the fused Lasso problem (\ref{fused-lasso}), the constrained total variation regularization problem (\ref{constrained-tv}) and the low-rank total variation image super-resolution problem (\ref{lrtv}) demonstrate the effectiveness of our proposed iterative algorithms. For the number of inner iterations, our numerical results confirm it could be fixed with a small number to keep the convergence of the iterative algorithms.
In the future work, we will consider to improve the proposed iterative algorithms by using line-search method without the requirement of the Lipschitz constant of $\nabla f$ and also the operator norm of $B$.

\section*{Competing interests}
The authors declare that they have no competing interests.

\section*{ACKNOWLEDGMENTS}
This work was supported by the National Natural Science Foundations
of China (11401293, 11661056, 11771198), the Natural Science
Foundations of Jiangxi Province (20151BAB211010), the China Postdoctoral Science Foundation (2015M571989)
and the Jiangxi Province Postdoctoral Science Foundation (2015KY51).

 %

\end{document}